\newtheorem{theorem}{Theorem}[section]
\newtheorem{lemma}[theorem]{Lemma}
\newtheorem{cor}[theorem]{Corollary}
\theoremstyle{definition}
\newtheorem{definition}[theorem]{Definition}
\theoremstyle{remark}
\newtheorem{remark}[theorem]{Remark}
\numberwithin{equation}{section}
\begin{document}

\title[Hemi-slant submanifolds]{The geometry of hemi-slant submanifolds\\
of a locally product Riemannian manifold}

\author{Hakan Mete TA\c STAN}

\address{\.Istanbul University\\
Department of Mathematics\\
Vezneciler, \.Istanbul, Turkey}

\email{hakmete@istanbul.edu.tr}

\author{FATMA \"{O}ZDEM\.{I}R}
\address{Department of Mathematics\\
\.{I}stanbul Technical University\\Maslak,
\.{I}stanbul, Turkey}
\email{fozdemir@itu.edu.tr}

\subjclass[2000]{Primary 53B25; Secondary 53C55.}

\keywords{locally product manifold, hemi-slant submanifold, slant distribution.}
\begin{abstract}
In the present paper, we study hemi-slant submanifolds of a locally
product Riemannian manifold. We prove that the anti-invariant
distribution which is  involved in the definition of hemi-slant
submanifold is integrable and give some applications of this result.
We get a necessary and sufficient condition for a proper hemi-slant
submanifold to be a hemi-slant product. We also study this type
submanifolds with parallel canonical structures. Moreover, we give
two characterization theorems for the totally umbilical proper
hemi-slant submanifolds. Finally, we obtain a basic inequality
involving Ricci curvature and the squared mean curvature of a
hemi-slant submanifold of a certain type locally product Riemannian
manifold.
\end{abstract}
\maketitle

\section{Introduction}
Study of slant submanifolds was initiated by B.Y. Chen \cite{Che},
as a generalization of both holomorphic and totally real
submanifolds of a K\"{a}hler manifold. Slant submanifolds have been
studied in different kind structures; almost contact \cite{Lo},
neutral K\"{a}hler \cite{Ar}, Lorentzian Sasakian \cite{Ale} and
Sasakian \cite{C} by several geometers. N. Papaghiuc \cite{Papa}
introduced semi-slant submanifolds of a K\"{a}hler manifold as a
natural generalization of slant submanifold. A. Carriazo \cite{Ca},
introduced bi-slant submanifolds of an almost Hermitian manifold as
a generalization of semi-slant submanifolds. One of the classes of
bi-slant submanifolds is that of anti-slant submanifolds which are
studied by A. Carriazo \cite{Ca}. However, B. \c{S}ahin \cite{Sa}
called these submanifolds as hemi-slant submanifolds because of that
the name anti-slant seems to refer that it has no slant factor. We
observe that a hemi-slant submanifold is a special case of generic
submanifold which was introduced by G.S. Ronsse \cite{Ro}. Since
then many geometers have studied hemi-slant submanifolds in
different kind structures; K\"{a}hler \cite{Al,Sa}, nearly
K\"{a}hler \cite{Ud}, generalized complex space form \cite{Muku} and
almost Hermitian \cite{Ta}. We note that sometimes  hemi-slant submanifolds
are also studied under the name pseudo-slant submanifolds, see \cite{Kh} and \cite{Ud}.
The submanifolds of a locally product Riemannian manifold have been studied by many geometers.
For example, T. Adati \cite{A} defined and studied invariant and
anti-invariant submanifolds, while A. Bejancu \cite{Be} and  G.
Pitis \cite{Pi} studied semi-invariant submanifolds. Slant and
semi-slant submanifolds of a locally product Riemannian manifold are
examined by B. \c{S}ahin \cite{S} and H. Li and X. Liu \cite{Li}. In
this paper, we study hemi-slant submanifolds of a locally product
Riemannian manifold in detail.

\section{Preliminaries}
This section is devoted to preliminaries. Actually, in subsection
2.1 we present the basic background needed for a locally product
Riemannian manifold. Theory of submanifolds and distributions related to
the study are given in subsection 2.2.

\subsection{Locally product Riemannian manifolds}
Let $\bar{M}$ be an $m$-dimensional manifold with a tensor field of
type (1,1) such that
\begin{equation}
\label{e00}
\begin{array}{c}
F^{2}=I, (F\neq\pm I)
\end{array},
\end{equation}
where $I$ is the identity morphism on the tangent bundle $T\bar{M}$
of $\bar{M}$. Then we say that $\bar{M}$ is an \emph{almost product
manifold} with almost product structure $F.$ If an almost product
manifold $(\bar{M},F)$ admits a Riemannian metric $g$ such that
\begin{equation}
\label{e0}
\begin{array}{c}
g(F\bar{U},F\bar{V})=g(\bar{U},\bar{V})
\end{array}
\end{equation}
for all $\bar{U},\bar{V}\in T\bar{M},$ then $\bar{M}$ is called an
\emph{almost
product Riemannian manifold}.\\

Next, we denote by $\overline{\nabla}$ the Riemannian connection
with respect to $g$ on $\bar{M}$. We say that $\bar{M}$ is a
\emph{locally product Riemannian manifold}, (briefly, \emph{l.p.R.
manifold}) if we have
\begin{eqnarray}
\label{e1}
(\overline{\nabla}_{\bar{U}}\,\,F)\bar{V}=0\,,
\end{eqnarray}
for all $\bar{U}, \bar{V}\in T\bar{M}$ \cite{Yan}.

\subsection{Submanifolds}
Let $M$ be a submanifold of a l.p.R. manifold $(\bar{M},g,F)$. Let  $\overline\nabla,
{\nabla},$ and $\nabla^{\bot}$ be the Riemannian,
induced Riemannian, and induced normal connection in $\bar{M}, M$ and the
normal bundle $T^{\bot}M$ of $M$, respectively. Then for all  $U,V\in TM$ and $\xi\in T^{\bot}M$
the Gauss and Weingarten formulas are given by
\begin{equation}
\label{e2}
\begin{array}{c}
\overline{\nabla}_{U}V={\nabla}_{U}V+h(U,V)
\end{array}
\end{equation}
and
\begin{equation}
\label{e3}
\begin{array}{c}
\overline\nabla_{U}\xi=-A_{\xi}U+\nabla_{U}^{\bot}\xi
\end{array}
\end{equation}
where $h$ is the second fundamental form related to shape operator. $A$ corresponding to the
normal vector field $\xi$ is given by
\begin{equation}
\label{e4}
\begin{array}{c}
g(h(U,V),\xi)=g(A_{\xi}U,V)
\end{array}.
\end{equation}

A submanifold $M$ is said to be \emph{totally geodesic} if its
second fundamental form vanishes identically, that is, $h=0,$ or
equivalently $A_{\xi}=0.$ We say that $M$ is \emph{totally
umbilical} submanifold in $\overline{M}$ if for all $U,V\in TM$ we
have
\begin{equation}
\label{e5}
\begin{array}{c}
h(U,V)=g(U,V)H
\end{array},
\end{equation}
where $H$ is the mean curvature vector field of $M$ in $\bar{M}$. A
normal vector field $\xi$ is said to be parallel, if
$\nabla^{\perp} _{U}\xi =0$ for each vector field $U \in TM.$\\

The Riemannian curvature tensor $ {\overline R}  $ of $ {\bar M}$ is given by
\begin{eqnarray} \label{}
{\overline R} (\bar U, \bar V)= \big [ {\overline \nabla}_{\bar U},
{\overline \nabla}_{\bar V}\big ]-{\overline\nabla}_{[\bar U,\bar
V]} ,
\end{eqnarray}
where $\bar U, \bar V \in T \bar M $

\noindent Then the Codazzi equation is given by
\begin{eqnarray} \label{}
\big ({\overline R}(U,V) W \big )^{\perp}= ({\overline \nabla}_{U} h )(V, W)- ({\overline \nabla}_{V} h )(U, W)
\end{eqnarray}
for all $U\,\, V,\,W \in TM$. Here, $ {\perp}$ denotes the normal component and the covariant derivative of $h$, denoted by
${\overline \nabla}_{U} h$ is defined by
\begin{eqnarray} \label{a1}
({\overline \nabla}_{U} h )(V, W)= {\nabla}^{\perp} _{U} h(V,W)-h({\nabla}_{U} V,W)-h(V,{\nabla}_{U} W).
\end{eqnarray}
Now, we write
\begin{equation} \label{e6}
\begin{array}{c}
FU=TU+NU
\end{array},
\end{equation}
for any $U\in TM$. Here $TU$ is the tangential part of $FU,$ and
$NU$ is the normal part of $FU.$ Similarly, for any $\xi\in
T^{\bot}M$, we put
\begin{equation}
\label{e7}
\begin{array}{c}
F\xi=t\xi+\omega\xi
\end{array},
\end{equation}
where $t\xi$ is the tangential part of $F\xi,$ and $\omega\xi$ is the
normal part of $F\xi.$\\

A distribution $\mathcal{D}$ on a manifold $\bar{M}$ is called
\emph{autoparallel} if $\overline\nabla_{X}Y\in\mathcal{D}$ for any
$X,Y\in\mathcal{D}$ and called \emph{parallel} if
$\overline \nabla_{U}X\in\mathcal{D}$ for any $X\in\mathcal{D}$ and $U\in TM.$
If a distribution $\mathcal{D}$ on $\bar{M}$ is autoparallel, then
it is clearly integrable, and by Gauss formula $\mathcal{D}$ is
totally geodesic in $\bar{M}$. If $\mathcal{D}$ is parallel then the
orthogonal complementary distribution $\mathcal{D}^{\perp}$ is also
parallel, which implies that $\mathcal{D}$ is parallel if and only
if $\mathcal{D}^{\perp}$ is parallel. In this case $\bar{M}$ is
locally product of the leaves of $\mathcal{D}$ and
$\mathcal{D}^{\perp}$. Let $M$ be a submanifold of $\bar{M}$. For
two distributions $\mathcal{D}_{1}$ and $\mathcal{D}_{2}$ on $M$, we
say that $M$ is $(\mathcal{D}_{1},\mathcal{D}_{2})$ mixed totally
geodesic if for all $X\in\mathcal{D}_{1}$ and $Y\in\mathcal{D}_{2}$
we have $h(X,Y)=0,$ where $h$ is the second fundamental form of $M$
\cite{Muku, Yan}.
\section{Hemi-slant submanifolds of a\\
locally product Riemannian manifold}
In this section, we define the notion of hemi-slant submanifold and
observe its effect to the tangent bundle of the submanifold and
canonical projection operators and start to study hemi-slant
submanifolds of a locally product Riemannian manifold.\\

Let $(\bar{M},g,F)$ be a locally product Riemannian manifold and let
$M$ be a submanifold of $\bar{M}$. A distribution $\mathcal{D}$ on
$M$ is said to be a \emph{slant distribution} if for
$X\in\mathcal{D}_{p},$ the angle $\theta$ between $FX$ and
$\mathcal{D}_{p}$ is constant, i.e., independent of $p\in M$ and
$X\in\mathcal{D}_{p}.$ The constant angle $\theta$ is called the
slant angle of the slant distribution $\mathcal{D}$ . A submanifold
$M$ of $\bar{M}$ is said to be a \emph{slant submanifold} if the
tangent bundle $TM$ of $M$ is slant \cite{Li, S}. Thus, the
$F-$invariant and $F-$anti-invariant submanifolds are slant
submanifolds with slant angle $\theta=0$ and $\theta=\pi / 2$,
respectively. A slant submanifold which is neither $F-$invariant nor
$F-$anti-invariant is called a \emph{proper} slant submanifold.
\begin{definition}
A \emph{hemi-slant submanifold} $M$ of a locally product Riemannian
manifold $\bar{M}$ is a submanifold which admits two orthogonal
complementary distributions $\mathcal{D}^{\perp}$ and
$\mathcal{D}^{\theta}$ such that
\end{definition}
\textbf{(a)} $TM$ admits the orthogonal direct decomposition
$TM=\mathcal{D}^{\perp}\oplus\mathcal{D}^{\theta}$

\textbf{(b)} The distribution $\mathcal{D}^{\perp}$ is
${F-}$anti-invariant, i.e., $F\mathcal{D}^{\perp}\subseteq
T^{\bot}M.$

\textbf{(c)} The distribution $\mathcal{D}^{\theta}$ is slant with slant angle $\theta$.\\

In this case, we call $\theta$ the slant angle of $M$. Suppose the
dimension of distribution $\mathcal{D}^{\perp}$ (resp.
$\mathcal{D}^{\theta}$ ) is $p$ (resp. $q$ ). Then we easily see
that the following particular cases.\\

\textbf{(d)} If $q=0,$ then $M$ is an anti-invariant submanifold
\cite{A}.

\textbf{(e)} If $p=0$ and $\theta=0$, then $M$ is an invariant
submanifold \cite{A}.

\textbf{(f)} If $p=0$ and $\theta\neq0, \frac{\pi}{2}$, then $M$ is
a proper slant submanifold \cite{S}.

\textbf{(g)} If $\theta=\frac{\pi}{2}$, then $M$ is an
anti-invariant submanifold.

\textbf{(h)} If $p\neq0$ and $\theta=0$, then $M$ is a
semi-invariant submanifold \cite{Be}.
\\

We say that the hemi-slant submanifold $M$ is \emph{proper} if
$p\neq0$ and $\theta\neq0,\frac{\pi}{2}.$

\begin{lemma} Let $M$ be a proper hemi-slant submanifold of a l.p.R. manifold $\bar{M}.$
Then we have,
\begin{equation}\label{e11}
\begin{array}{c}
 F(\mathcal{D}^{\perp})\perp N(\mathcal{D}^{\theta})
\end{array}.
\end{equation}
\end{lemma}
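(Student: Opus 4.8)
The plan is to reduce the claim to the metric compatibility relation \eqref{e0} together with the elementary fact that normal vectors are orthogonal to tangent vectors. The key preliminary observation is that condition \textbf{(b)} in the definition says exactly that, for any $Z\in\mathcal{D}^{\perp}$, the vector $FZ$ is normal; hence in the decomposition \eqref{e6} we have $TZ=0$ and $FZ=NZ$. In particular $F(\mathcal{D}^{\perp})=N(\mathcal{D}^{\perp})$, so the whole statement is really about the orthogonality of the two normal subbundles $N(\mathcal{D}^{\perp})$ and $N(\mathcal{D}^{\theta})$.

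With this in hand, I would fix arbitrary $Z\in\mathcal{D}^{\perp}$ and $X\in\mathcal{D}^{\theta}$ and simply evaluate $g(FZ,NX)$. Writing $NX=FX-TX$ from \eqref{e6}, I split
\begin{equation*}
g(FZ,NX)=g(FZ,FX)-g(FZ,TX).
\end{equation*}
The first term collapses via the compatibility \eqref{e0}: $g(FZ,FX)=g(Z,X)$, and this vanishes because $\mathcal{D}^{\perp}$ and $\mathcal{D}^{\theta}$ are orthogonal complementary distributions by \textbf{(a)}. The second term vanishes for a purely structural reason: $FZ$ is a normal vector field (by the anti-invariance just noted) while $TX$ is, by definition, the tangential component of $FX$, so $g(FZ,TX)=0$. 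Combining the two gives $g(FZ,NX)=0$, which is exactly \eqref{e11}.

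I do not anticipate a genuine obstacle here; the argument is a direct two-term computation, and the only point requiring care is the bookkeeping of tangential versus normal components. Specifically, one must use the anti-invariance hypothesis \textbf{(b)} twice over — once to declare $FZ$ normal so that it pairs to zero against the tangential field $TX$, and implicitly again to see that the statement concerns normal bundles — rather than appealing to any slant property of $\mathcal{D}^{\theta}$ beyond orthogonality. Notably, the conclusion does not actually require $M$ to be \emph{proper}; the computation goes through verbatim, the properness hypothesis merely guaranteeing that both distributions are nontrivial so that the statement has content.
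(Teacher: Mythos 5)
Your proposal is correct and is essentially the paper's own proof: the author computes $g(FX,NZ)=g(FX,FZ)=g(X,Z)=0$ for $X\in\mathcal{D}^{\perp}$, $Z\in\mathcal{D}^{\theta}$, using exactly your two ingredients --- the splitting \eqref{e6} together with the normality of $F\mathcal{D}^{\perp}$ to drop the tangential term, and the compatibility \eqref{e0} with orthogonality of the distributions to kill the rest. Your added remark that properness is never used is also accurate; the paper's computation likewise works for any hemi-slant submanifold.
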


\begin{proof}  For any  $X\in\mathcal{D}^{\perp}$ and $Z\in \mathcal{D}^{\theta}$, using  (2.2)
and (2.11), we have \\ $g(FX,NZ)=g(FX,FZ)=g(X,Z)=0.$ This completes
the proof.
\end{proof}
In view of Lemma 3.2, for a hemi-slant submanifold $M$ of a l.p.R.
manifold $\bar{M},$  the normal bundle $T^{\bot}M$ of $M$ is
decomposed as
\begin{equation} \label{e12}
\begin{array}{c}
T^{\bot}M=F(\mathcal{D}^{\perp})\oplus N(\mathcal{D}^{\theta})
\oplus\mu
\end{array},
\end{equation}
where $\mu$ is the orthogonal complementary distribution of $
F(\mathcal{D}^{\perp})\oplus N(\mathcal{D}^{\theta})$ in $T^{\bot}M$
and it is invariant subbundle of $T^{\bot}M$ with
respect to $F.$\\

The following facts follow easily from (2.1), (2.11) and (2.12) and  will be
used later.
\begin{eqnarray}\label{e131}
&&(a)\quad T^{2}+tN=I, \quad\quad (b)\quad \omega^{2}+Nt=I,\nonumber\\
&&(c)\quad NT+\omega N=0,~~~~\quad(d)\quad Tt+t\omega=0.
\end{eqnarray}
As in a slant submanifold \cite{S}, for a hemi-slant submanifold $M$
of a l.p.R. manifold $\overline{M}$ , we have
\begin{equation}\label{e141}
\begin{array}{c}
T^{2}Z=\cos^{2}\!\theta Z\,,
\end{array}
\end{equation}
\begin{equation}\label{e151}
\begin{array}{c}
g(TZ,TW)=\cos^{2}\!\theta g(Z,W)
\end{array}
\end{equation}
and
\begin{equation}
\begin{array}{c}
g(NZ,NW)=\sin^{2}\!\theta g(Z,W)
\end{array},
\end{equation}
where $Z,W\in \mathcal{D}^{\theta}\,.$\\
\begin{lemma} Let $M$ be a proper hemi-slant submanifold of a l.p.R. manifold $\bar{M}.$
Then we have,
\begin{equation} \label{e8}
\begin{array}{c}
(a)\quad T(\mathcal{D}^{\perp})=\{0\}, \quad\quad (b)\quad
T(\mathcal{D}^{\theta})=\mathcal{D}^{\theta}
\end{array}.
\end{equation}
\end{lemma}
\begin{proof} Since $\mathcal{D}^{\perp}$ is anti-invariant with respect to $F$,
(a) follows from (2.11). For any  $Z\in \mathcal{D}^{\theta}$ and
$X\in \mathcal{D}^{\perp}$, using (2.1), (2.2) and (2.11), we have
$g(TZ,X)=g(FZ,X)=g(Z,FX)=0.$ Hence, we conclude that
$T(\mathcal{D}^{\theta})\perp\mathcal{D}^{\perp}$. Since
$T(\mathcal{D}^{\theta})\subseteq TM$, it follows that
$T(\mathcal{D}^{\theta})\subseteq\mathcal{D}^{\theta}.$ Let $W$ be
in $ \mathcal{D}^{\theta}$. Then using (3.4), we have
$W=\frac{1}{\cos^2\!\theta}(\cos^{2}\!\theta W)=\frac{1}{\cos^2
\!\theta}T^{2}W=\frac{1}{\cos^2\!\theta}T(TW)$. So, we find $W \in
T(\mathcal{D}^{\theta})$. It follows that $\mathcal{D}^{\theta}
\subseteq T(\mathcal{D}^{\theta}).$ Thus, we get the assertion (b).
\end{proof}
Thanks to Theorem 3.1 \cite{S}, we characterize hemi-slant
submanifolds of a l.p.R. manifold.
\begin{theorem}
Let $M$ be a submanifold of a l.p.R. manifold $\bar{M}$. Then $M$ is
a hemi-slant submanifold if and only if there exists a constant
$\lambda \in [0,1]$ and a distribution $\mathcal{D}$ on $M$ such
that
\end{theorem}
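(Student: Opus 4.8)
The plan is to prove the two implications separately, using the slant characterization of Theorem 3.1 in \cite{S} (equivalently, a direct angle computation) as the engine, and taking the candidate distribution to be $\mathcal{D}=\mathcal{D}^{\theta}$ in the forward direction. The one structural fact I will lean on is that $T$ is self-adjoint for $g$: from (2.1) and (2.2) we get $g(FU,V)=g(U,FV)$, and since $NU$ is normal while $V$ is tangent, (2.11) gives $g(TU,V)=g(FU,V)=g(U,FV)=g(U,TV)$ for all $U,V\in TM$. This elementary fact does most of the work.

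For the ``only if'' part, assume $M$ is hemi-slant with slant angle $\theta$, and set $\lambda=\cos^{2}\theta$ and $\mathcal{D}=\mathcal{D}^{\theta}$, so that the orthogonal complement of $\mathcal{D}$ in $TM$ is exactly $\mathcal{D}^{\perp}$. Then the identity $T^{2}X=\lambda X$ on $\mathcal{D}$ is precisely (3.4), while the requirement that $T$ annihilate the orthogonal complement of $\mathcal{D}$ is Lemma 3.3(a), namely $T(\mathcal{D}^{\perp})=\{0\}$. Thus this direction follows at once from the material already established.

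The content lies in the ``if'' part. Given a constant $\lambda\in[0,1]$ and a distribution $\mathcal{D}$ with the stated properties, let $\mathcal{D}^{\perp}$ denote the orthogonal complement of $\mathcal{D}$ in $TM$. First I would note that $\mathcal{D}^{\perp}$ is $F$-anti-invariant: the hypothesis $TU=0$ for $U\in\mathcal{D}^{\perp}$ forces $FU=NU\in T^{\perp}M$ by (2.11), which is condition (b) of Definition 3.1. Next I would show $T(\mathcal{D})\subseteq\mathcal{D}$; this is where self-adjointness enters. For $X\in\mathcal{D}$ and $U\in\mathcal{D}^{\perp}$ we have $g(TX,U)=g(X,TU)=0$, so $TX\perp\mathcal{D}^{\perp}$ and hence $TX\in\mathcal{D}$. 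In particular the tangential projection of $FX$ onto $\mathcal{D}$ coincides with $TX$ for every $X\in\mathcal{D}$. It then remains to prove $\mathcal{D}$ is slant: for $0\neq X\in\mathcal{D}_{p}$ the angle $\theta(X)$ between $FX$ and $\mathcal{D}_{p}$ satisfies $\cos\theta(X)=|TX|/|FX|$, and using $|FX|^{2}=g(X,X)$ from (2.2) together with $|TX|^{2}=g(T^{2}X,X)=\lambda\,g(X,X)$ I obtain $\cos^{2}\theta(X)=\lambda$, constant in both $p$ and $X$. Therefore $\mathcal{D}$ is a slant distribution with angle $\arccos\sqrt{\lambda}$, so that $TM=\mathcal{D}^{\perp}\oplus\mathcal{D}$ exhibits $M$ as a hemi-slant submanifold with $\lambda=\cos^{2}\theta$.

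The single delicate point I anticipate is the inclusion $T(\mathcal{D})\subseteq\mathcal{D}$ in the ``if'' direction: without it one cannot identify $TX$ with the $\mathcal{D}_{p}$-component of $FX$, and the angle computation would be meaningless. Fortunately this is settled cleanly by the self-adjointness of $T$ and the hypothesis $TU=0$ on $\mathcal{D}^{\perp}$, so I do not expect a genuine obstacle beyond routine bookkeeping.
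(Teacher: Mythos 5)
Your proof is correct, and its skeleton is the same as the paper's: in the forward direction both take $\mathcal{D}=\mathcal{D}^{\theta}$, $\lambda=\cos^{2}\theta$, citing (3.4) for condition (a) and Lemma 3.3 for condition (b); in the converse both decompose $TM=\mathcal{D}^{\perp}\oplus\mathcal{D}$ and observe that $TX=0$ forces $FX=NX\in T^{\perp}M$, so the complement is anti-invariant. The genuine difference is in how much of the converse is actually proved. The paper's version asserts $T(\mathcal{D})\subseteq\mathcal{D}$ without argument and never verifies that $\mathcal{D}$ is slant: that step is outsourced to Theorem 3.1 of \cite{S}, invoked in the sentence introducing the theorem. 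You make the argument self-contained, and your key lemma --- self-adjointness of $T$, i.e. $g(TU,V)=g(U,TV)$, derived from (2.1), (2.2) and (2.11) --- does double duty: it gives $T(\mathcal{D})\subseteq\mathcal{D}$ (so the tangential projection of $FX$ onto $\mathcal{D}_p$ really is $TX$), and it gives $\|TX\|^{2}=g(T^{2}X,X)=\lambda\|X\|^{2}$, which together with $\|FX\|^{2}=\|X\|^{2}$ yields $\cos^{2}\theta(X)=\lambda$ independently of $p$ and $X$. This is essentially a reproof of the cited result from \cite{S}, so your route buys independence from that reference at the cost of a short extra computation. For completeness: the paper's unproven inclusion $T(\mathcal{D})\subseteq\mathcal{D}$ also follows without self-adjointness, since by (a) $\mathcal{D}$ is the $\lambda$-eigenspace of $T^{2}$ and $T$ commutes with $T^{2}$. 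The one place where you inherit the paper's looseness is the forward direction: condition (a) asserts that $\mathcal{D}$ \emph{equals} the eigenspace $\{U:T^{2}U=\lambda U\}$, whereas (3.4) only gives the inclusion $\mathcal{D}^{\theta}\subseteq\{U:T^{2}U=\lambda U\}$; showing no vector outside $\mathcal{D}^{\theta}$ satisfies the eigenvalue equation (which holds when $\lambda\neq 0$, using $T=0$ on $\mathcal{D}^{\perp}$) takes one more line that neither you nor the paper writes.
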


\textbf{(a)} \, $\mathcal{D}= \{ U \in TM \,\,\,\vert \,\,\,T^2
U=\lambda U\}$,

\textbf{(b)} \, for any $X \in TM$ orthogonal to $\mathcal{D}$,
$TX=0$.

Moreover, in this case $\lambda=\cos^2\!\theta$, where $\theta$ is
the slant angle of $M$.
\begin{proof} Let $M$ be a hemi-slant submanifold of $\bar{M}$. By the definition of
hemi-slant submanifold, we have $\mathcal{D}= \mathcal{D}^{\theta} $
and $\lambda=\cos^2\!\theta$. So, $(a)$ follows. \noindent  $(b)$
follows from Lemma 3.3. Conversely,  $(a)$ and  $(b)$ imply
$TM=\mathcal{D}^{\perp}\oplus\mathcal{D}$. Since
$T(\mathcal{D})\subseteq\mathcal{D}$, we conclude that
$\mathcal{D}^{\perp}$ is an anti-invariant distribution from $(b)$.
\end{proof}
\noindent {\bf Example.} Consider the Euclidean 6-space
$\mathbb{R}^6$ with usual metric $g$. Define the almost product
structure $F$ on $(\mathbb{R}^6, g)$ by
\begin{equation}\nonumber\\
\begin{array}{c}
F(\displaystyle\frac{\partial}{\partial x_{i}})=\frac{\partial}{\partial y_{i}}, \quad F(\displaystyle\frac{\partial}{\partial y_{i}})=\frac{\partial}{\partial x_{i}},
\quad i=1,2,3.
\end{array}
\end{equation}
Where $(x_1, x_2,x_{3},y_1, y_2,y_{3})$ are natural coordinates of $\mathbb{R}^6$. Then $\bar{M}= (\mathbb{R}^6, g,F)$
be an almost product Riemannian manifold. Furthermore,
it is easy to see that $\bar M$ is a l.p.R. manifold. Let M be a submanifold of $\bar{M}$ defined by
\begin{equation}\nonumber
f(u,v,w)= \big( {u\over \sqrt 2}, {u\over \sqrt 2},u+v,{w\over \sqrt 2}, {w\over \sqrt 2}, 0 \big)\,, \quad \quad {u\neq 0}.
\end{equation}\\
Then, a local frame of $TM$ is given by
\begin{eqnarray}
&&X=\frac{\partial}{\partial x_{3}}\,,\qquad\qquad\qquad\qquad\qquad\qquad\nonumber\\[.2cm]
&&Z={1\over \sqrt 2}\frac{\partial}{\partial x_{1}}+ {1\over \sqrt 2}\frac{\partial}{\partial x_{2}}+\frac{\partial}{\partial x_{3}},\qquad\qquad\qquad\qquad\qquad\qquad\nonumber\\[.4cm]
&&W={1\over \sqrt 2}\frac{\partial}{\partial y_{1}} + {1\over \sqrt 2}\frac{\partial}{\partial y_{2}}.\qquad\qquad\qquad\qquad\qquad\qquad\nonumber
\end{eqnarray}\\
By using the almost product structure $F$ above, we see that $FX$ is
orthogonal to $TM$, thus $\mathcal{D}^{\perp}= \rm{span} \{X\}$.
Moreover, it is not difficult to see that $\mathcal{D}^{\theta}= span\{
Z, W \}$ is a slant distribution with slant angle $\theta = {\pi/
3}$\,. Thus, $M$ is a proper hemi-slant submanifold of $\bar M$.
\section{Integrability}
In this section, we give a necessary and sufficient condition for
the integrability of the slant distribution of the hemi- slant
submanifold. After that we prove that the anti invariant
distribution of the hemi-slant submanifold is always integrable and
give some applications of this result.\\

Let $M$ be a submanifold of a l.p.R. manifold $\bar{M}$. For any
$U$,$V\in TM$, we have $\overline{\nabla}_{U} FV=
F\overline{\nabla}_{U}V$ from (\ref{e1}). Then, using
(\ref{e2}-\ref{e3}), (\ref{e6}-\ref{e7}) and identifying the
components from $TM$ and  $T^{\perp} M$, we have the following.
\begin{lemma} Let $M$ be a submanifold of a l.p.R. manifold $\bar{M}.$
Then we have,
\begin{equation}
\label{e17}\begin{array}{c} {\nabla}_{U} TV- A_{NV} U=
T{\nabla}_{U}V+ t\,h(U,V)\,,
\end{array}
\end{equation}
\end{lemma}
\begin{equation}\label{eon7}
\begin{array}{c}
h(U,TV)+ {\nabla}_{U}^{\perp} NV = N{\nabla}_{U}V+ \omega\,h(U,V)
\end{array}.
\end{equation}
for all  $U$,$V\in TM$.

In a similar way, we have that:
\begin{lemma} Let $M$ be a submanifold of a l.p.R. manifold $\overline{M}.$
Then we have,
\begin{equation} \label{e18}
\begin{array}{c}
{\nabla}_{U}\,t\,{\xi}- A_{\omega{\xi}}U= -TA_{\xi} U + t {{\nabla}
^{\perp} _{U}}\,{\xi}
\end{array},
\end{equation}
\end{lemma}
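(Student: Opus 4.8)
The plan is to imitate the derivation of Lemma 4.1, but starting from a normal vector field $\xi$ rather than a tangent vector field $V$. The governing identity is again the locally product condition (2.3), which gives $(\overline{\nabla}_{U}F)\xi=0$, i.e. $\overline{\nabla}_{U}(F\xi)=F\,\overline{\nabla}_{U}\xi$ for every $U\in TM$ and $\xi\in T^{\perp}M$. I would expand each side into its tangential and normal parts and then read off (4.3) as the tangential component.

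For the left-hand side I would substitute the decomposition $F\xi=t\xi+\omega\xi$ from (2.11). Since $t\xi\in TM$ and $\omega\xi\in T^{\perp}M$, I apply the Gauss formula (2.4) to $\overline{\nabla}_{U}(t\xi)$ and the Weingarten formula (2.5) to $\overline{\nabla}_{U}(\omega\xi)$, obtaining
\[
\overline{\nabla}_{U}(F\xi)=\nabla_{U}t\xi+h(U,t\xi)-A_{\omega\xi}U+\nabla^{\perp}_{U}\omega\xi.
\]
For the right-hand side I would first apply the Weingarten formula (2.5) to write $\overline{\nabla}_{U}\xi=-A_{\xi}U+\nabla^{\perp}_{U}\xi$, and then act with $F$. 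The term $-A_{\xi}U$ is tangent, so I split it using $FU=TU+NU$ from (2.10), giving $-TA_{\xi}U-NA_{\xi}U$; the term $\nabla^{\perp}_{U}\xi$ is normal, so I split it using (2.11) as $t\nabla^{\perp}_{U}\xi+\omega\nabla^{\perp}_{U}\xi$.

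Equating the two expansions and collecting the $TM$-components yields exactly
\[
\nabla_{U}t\xi-A_{\omega\xi}U=-TA_{\xi}U+t\nabla^{\perp}_{U}\xi,
\]
which is (4.3); the $T^{\perp}M$-components give the companion identity $h(U,t\xi)+\nabla^{\perp}_{U}\omega\xi=-NA_{\xi}U+\omega\nabla^{\perp}_{U}\xi$, analogous to (4.2). There is no genuine obstacle here: the argument is a direct computation parallel to Lemma 4.1, and the only care needed is the consistent bookkeeping of which terms are tangential and which are normal when applying the four operators $T$, $N$, $t$, $\omega$ together with the Gauss and Weingarten formulas.
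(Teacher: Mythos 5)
Your proof is correct and is exactly the paper's argument: the paper derives Lemma 4.2 ``in a similar way'' to Lemma 4.1, namely by applying the parallelism of $F$ to a normal field $\xi$, expanding $\overline{\nabla}_{U}(t\xi+\omega\xi)$ via the Gauss and Weingarten formulas, expanding $F(-A_{\xi}U+\nabla^{\perp}_{U}\xi)$ via the decompositions $FU=TU+NU$ and $F\xi=t\xi+\omega\xi$, and matching tangential parts (your normal-part identity is precisely the companion equation (4.4) stated after the lemma). The only blemish is the off-by-one in the equation labels (the decompositions are (2.11) and (2.12) in the paper, not (2.10) and (2.11)), which does not affect the mathematics.
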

\begin{equation}
\begin{array}{c}
h(U,t\,{\xi})+ {\nabla}_{U}^{\perp}\,\omega\,{\xi}= -N A_{\xi} U+ \omega{{\nabla} ^{\perp} _{U}}\,{\xi}
\end{array}
\end{equation}
for any $U \in TM$ and ${\xi} \in T ^{\perp} M$.
\begin{theorem} Let $M$ be a hemi-slant manifold of a l.p.R. manifold $\overline{M}.$
Then, the slant distribution $\mathcal{D}^{\theta}$ is integrable if and only if
\begin{equation}
\begin{array}{c}
A_{NZ} W- A_{NW} Z + {\nabla}_{Z} TW- {\nabla}_{W} TZ \in \mathcal{D}^{\theta}
\end{array}
\end{equation}
\end{theorem}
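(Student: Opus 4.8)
The plan is to characterize integrability of $\mathcal{D}^{\theta}$ by showing that the Lie bracket $[Z,W]$ of two vector fields $Z,W\in\mathcal{D}^{\theta}$ stays in $\mathcal{D}^{\theta}$ precisely when the stated expression does. The natural strategy is to test $[Z,W]$ against the complementary distribution $\mathcal{D}^{\perp}$: by the orthogonal decomposition $TM=\mathcal{D}^{\perp}\oplus\mathcal{D}^{\theta}$ of condition (a) in the definition, $[Z,W]\in\mathcal{D}^{\theta}$ if and only if its $\mathcal{D}^{\perp}$-component vanishes, i.e.\ $g([Z,W],X)=0$ for all $X\in\mathcal{D}^{\perp}$. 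So the real work is to transform this orthogonality condition into a statement about the operators $A$, $\nabla$, $T$ and $N$.

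First I would apply the key formula (\ref{e17}) from Lemma 4.1 with $U,V$ both taken in $\mathcal{D}^{\theta}$. Writing it for the pair $(Z,W)$ and again for $(W,Z)$ and subtracting, the symmetric terms involving $h$ should combine: since $h$ is symmetric, $t\,h(Z,W)=t\,h(W,Z)$, and the covariant-derivative terms collapse into the bracket because $T\nabla_Z W-T\nabla_W Z=T[Z,W]$. This yields an identity of the form
\begin{equation}
\nabla_Z TW-\nabla_W TZ-A_{NW}Z+A_{NZ}W=T[Z,W].\nonumber
\end{equation}
The left-hand side is exactly the expression appearing in the theorem. So the problem reduces to showing that the left side lies in $\mathcal{D}^{\theta}$ if and only if $T[Z,W]\in\mathcal{D}^{\theta}$ does.

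The next step is to connect $T[Z,W]\in\mathcal{D}^{\theta}$ with $[Z,W]\in\mathcal{D}^{\theta}$. Here I would decompose $[Z,W]=P+Q$ with $P\in\mathcal{D}^{\perp}$ and $Q\in\mathcal{D}^{\theta}$, and apply $T$. By Lemma 3.3, $T(\mathcal{D}^{\perp})=\{0\}$ and $T(\mathcal{D}^{\theta})=\mathcal{D}^{\theta}$, so $T[Z,W]=TQ\in\mathcal{D}^{\theta}$ automatically; thus $T[Z,W]$ is \emph{always} in $\mathcal{D}^{\theta}$. Consequently, from the identity above, the stated expression is always in $\mathcal{D}^{\theta}$ as well, and the genuine content must be the equivalence between the full bracket lying in $\mathcal{D}^{\theta}$ and a condition one extracts by applying $T^{2}$ or by using (\ref{e141}), namely $T^{2}Z=\cos^{2}\!\theta\,Z$ on $\mathcal{D}^{\theta}$. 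The point is that the slant relation $T^{2}=\cos^{2}\!\theta\,I$ holds only on the $\mathcal{D}^{\theta}$ factor, so applying $T$ once more to $T[Z,W]$ and comparing with $\cos^{2}\!\theta\,[Z,W]$ detects the $\mathcal{D}^{\perp}$-component of $[Z,W]$.

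The main obstacle I anticipate is precisely this last bookkeeping: since $T[Z,W]$ is automatically tangent to $\mathcal{D}^{\theta}$, the naive reading makes the condition vacuous, so one must apply $T$ to the whole identity and use the eigenvalue relation (\ref{e141}) to recover genuine information. Concretely, applying $T$ to the displayed identity gives $T\big(\nabla_Z TW-\nabla_W TZ-A_{NW}Z+A_{NZ}W\big)=T^{2}[Z,W]$, and $T^{2}[Z,W]=\cos^{2}\!\theta\,[Z,W]$ holds if and only if $[Z,W]\in\mathcal{D}^{\theta}$ (since $T^{2}$ annihilates $\mathcal{D}^{\perp}$). Thus $[Z,W]\in\mathcal{D}^{\theta}$ is equivalent to the image under $T$ of the stated expression equalling $\cos^{2}\!\theta\,[Z,W]$, which in turn — because $T$ is injective on $\mathcal{D}^{\theta}$ and annihilates $\mathcal{D}^{\perp}$ — is equivalent to the expression itself lying in $\mathcal{D}^{\theta}$. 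Verifying the two directions of this equivalence carefully, keeping track of which component of each bracket survives under $T$, is where the care is needed; the algebra preceding it is routine once Lemma 4.1 and Lemma 3.3 are invoked.
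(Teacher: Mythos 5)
Your computation of the identity
\[
A_{NZ}W-A_{NW}Z+\nabla_Z TW-\nabla_W TZ=T[Z,W]
\]
is exactly the paper's derivation of (\ref{e24}), and your key observation is correct and important: since $T$ annihilates $\mathcal{D}^{\perp}$ and maps $\mathcal{D}^{\theta}$ into itself (Lemma 3.3), the right-hand side, hence the expression in the theorem, lies in $\mathcal{D}^{\theta}$ for \emph{every} hemi-slant submanifold. The stated condition is therefore automatic, so the ``if and only if'' could only hold if $\mathcal{D}^{\theta}$ were always integrable --- which is not the case (already for $\theta=0$, integrability of the invariant distribution amounts to the non-automatic condition $h(X,FY)=h(FX,Y)$, and the paper itself treats integrability of $\mathcal{D}^{\theta}$ as nontrivial in Theorem 4.4). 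The paper's one-line conclusion, that ``the assertion follows from (3.7-$b$) and (\ref{e24}),'' tacitly uses the implication $T[Z,W]\in\mathcal{D}^{\theta}\Rightarrow[Z,W]\in\mathcal{D}^{\theta}$, which fails precisely because $\ker T=\mathcal{D}^{\perp}$. So you have correctly located a genuine defect in the theorem and in the paper's own proof.

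The gap is in your attempted repair, which reinstates the very implication you refuted. You correctly note that $[Z,W]\in\mathcal{D}^{\theta}$ is equivalent to $T(\mathrm{expr})=\cos^{2}\!\theta\,[Z,W]$ (for $\theta\neq\pi/2$), but your final claim that this is in turn ``equivalent to the expression itself lying in $\mathcal{D}^{\theta}$'' contradicts your own earlier, correct, conclusion that $\mathrm{expr}\in\mathcal{D}^{\theta}$ holds unconditionally. Writing $[Z,W]=P+Q$ with $P\in\mathcal{D}^{\perp}$ and $Q\in\mathcal{D}^{\theta}$, one has $\mathrm{expr}=TQ\in\mathcal{D}^{\theta}$ always, while $T(\mathrm{expr})=\cos^{2}\!\theta\,Q$ equals $\cos^{2}\!\theta\,[Z,W]$ only when $P=0$; injectivity of $T$ on $\mathcal{D}^{\theta}$ cannot close this gap, because the discrepancy $\cos^{2}\!\theta\,P$ lives in $\ker T$. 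Thus your chain of equivalences breaks at its last link; read literally, the theorem asserts unconditional integrability of $\mathcal{D}^{\theta}$, and no bookkeeping with $T$ can rescue the stated biconditional. A criterion that does work tests the bracket through $N$ rather than $T$: antisymmetrizing (\ref{eon7}) gives $N[Z,W]=h(Z,TW)-h(W,TZ)+\nabla^{\perp}_Z NW-\nabla^{\perp}_W NZ$, and since $F$ is injective on $\mathcal{D}^{\perp}$ while $F(\mathcal{D}^{\perp})\perp N(\mathcal{D}^{\theta})$ by Lemma 3.2, the distribution $\mathcal{D}^{\theta}$ is integrable if and only if this quantity is orthogonal to $F(\mathcal{D}^{\perp})$ for all $Z,W\in\mathcal{D}^{\theta}$ --- a condition that, unlike the one in the theorem, is not automatic.
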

for any $Z$,$W \in \mathcal{D}^{\theta}$.
\begin{proof} From (\ref{e17}), we have
\begin{equation}\label{e22}
\begin{array}{c}
{\nabla}_{Z} TW- A_{NW} Z= T {\nabla}_{Z} W +t\,h(Z,V)
\end{array}
\end{equation}
and
\begin{equation}\label{e23}
\begin{array}{c}
{\nabla}_{W} TZ- A_{NZ} W= T {\nabla}_{W} Z +t h(W,Z)
\end{array}
\end{equation}
for any $Z$, $W\in \mathcal{D}^{\theta}$. Since $ h$ is a symmetric $(0,2)$-type tensor field, from (\ref{e22}) and (\ref{e23}), we get
\begin{equation}\label{e24}
\begin{array}{c}
A_{NZ} W- A_{NW} Z + {\nabla}_{Z} TW- {\nabla}_{W} TZ= T [Z,W]
\end{array}.
\end{equation}
Thus, our assertion follows from  (3.7-$b$) and (\ref{e24}).
\end{proof}
\noindent The following we give an application of Theorem 4.3.
\begin{theorem}
Let $M$ be a hemi-slant manifold of a l.p.R. manifold $\bar{M}$. If $M$ is
$\mathcal{D}^{\theta}$-totally geodesic, then the slant distribution $\mathcal{D}^{\theta}$
is integrable.
\end{theorem}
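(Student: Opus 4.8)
The plan is to reduce the claim to the integrability criterion furnished by Theorem 4.3, which asserts that $\mathcal{D}^{\theta}$ is integrable precisely when $A_{NZ}W - A_{NW}Z + \nabla_{Z} TW - \nabla_{W} TZ \in \mathcal{D}^{\theta}$ for all $Z, W \in \mathcal{D}^{\theta}$. Thus it suffices to verify this membership under the hypothesis. First I would make the hypothesis explicit: $M$ being $\mathcal{D}^{\theta}$-totally geodesic means $h(Z,W) = 0$ for all $Z, W \in \mathcal{D}^{\theta}$.

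Next I would feed this into equation (4.1) of Lemma 4.1. Taking $U, V \in \mathcal{D}^{\theta}$ there, the term $t\,h(U,V)$ vanishes, leaving $\nabla_{U} TV - A_{NV}U = T\nabla_{U} V$. The decisive point is that the right-hand side already sits in $\mathcal{D}^{\theta}$: by Lemma 3.3 we have $T(\mathcal{D}^{\perp}) = \{0\}$ and $T(\mathcal{D}^{\theta}) = \mathcal{D}^{\theta}$, so $T$ carries all of $TM$ into $\mathcal{D}^{\theta}$, and in particular $T\nabla_{U} V \in \mathcal{D}^{\theta}$. Applying this with $(U,V) = (Z,W)$ and with $(U,V) = (W,Z)$ yields
\begin{equation*}
\nabla_{Z} TW - A_{NW}Z \in \mathcal{D}^{\theta}, \qquad \nabla_{W} TZ - A_{NZ}W \in \mathcal{D}^{\theta}.
\end{equation*}
Subtracting these two memberships shows that $A_{NZ}W - A_{NW}Z + \nabla_{Z} TW - \nabla_{W} TZ$ lies in $\mathcal{D}^{\theta}$, which is exactly the criterion of Theorem 4.3, and integrability of $\mathcal{D}^{\theta}$ follows. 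Equivalently, one may first use the symmetry of $h$ to recover the identity $A_{NZ}W - A_{NW}Z + \nabla_{Z} TW - \nabla_{W} TZ = T[Z,W]$ as in (4.8), and then observe directly that $T[Z,W] \in \mathcal{D}^{\theta}$.

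The calculations are routine once the hypothesis is inserted; the only genuine step is the structural observation that every vector of the form $T\nabla_{U} V$ is automatically in $\mathcal{D}^{\theta}$, which forces the shape-operator contributions $A_{NW}Z$ and $A_{NZ}W$ into $\mathcal{D}^{\theta}$ after the $t\,h$ terms are killed. I expect no real obstacle beyond this bookkeeping: Lemma 4.1 together with the vanishing of $h$ on $\mathcal{D}^{\theta}\times\mathcal{D}^{\theta}$ delivers the Theorem 4.3 condition directly.
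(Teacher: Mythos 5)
Your proof is correct and follows essentially the same route as the paper's: insert the hypothesis $h(\mathcal{D}^{\theta},\mathcal{D}^{\theta})=0$ into Lemma 4.1, use Lemma 3.3 to conclude the expression $A_{NZ}W-A_{NW}Z+\nabla_{Z}TW-\nabla_{W}TZ$ lies in $\mathcal{D}^{\theta}$, and invoke Theorem 4.3. The only cosmetic difference is that the paper verifies this membership by pairing $T[Z,W]$ against an arbitrary $X\in\mathcal{D}^{\perp}$ and showing the inner product vanishes, whereas you observe directly that the range of $T$ is contained in $\mathcal{D}^{\theta}$; the substance is identical.
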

\begin{proof}Suppose that $M$ is  $\mathcal{D}^{\theta}$-totally geodesic, that is, for any $Z$, $W \in \mathcal{D}^{\theta}$ we have
\begin{equation}\label{e43}
\begin{array}{c}
 h(Z,W)=0.
\end{array}
\end{equation}
Thus, from (\ref{e17}), using (\ref{e43}), we have
 \begin{equation}\label{e44}
\begin{array}{c}
A_{NZ} W-\nabla_W TZ =- T\nabla_W Z
\end{array}
\end{equation}
 and similarly
\begin{equation}\label{e45}
\begin{array}{c}
 A_{NW} Z-\nabla_Z TW =- T\nabla_Z W\,.
\end{array}
\end{equation}
From (\ref{e44}) and (\ref{e45}), using Lemma 3.3, we get
\begin{equation}\label{e46}
\begin{array}{c}
g(A_{NZ} W-A_{NW}Z + \nabla_Z TW -\nabla_W TZ, X) = g (T [Z,W],X)=0
\end{array}
\end{equation}
for any $X \in \mathcal{D}^{\perp}$. The last equation (\ref{e46}) says that
\begin{equation}\nonumber
\begin{array}{c}
A_{NZ} W-A_{NW} Z+ \nabla_Z TW -\nabla_W TZ \in \mathcal{D}^{\theta}
\end{array}
\end{equation}
and by Theorem 4.3, we deduce that $\mathcal{D}^{\theta}$ is integrable.
\end{proof}
\begin{lemma} Let $M$ be a hemi-slant submanifold of a l.p.R. manifold $\bar{M}$. Then,
\begin{equation}\label{e25}
\begin{array}{c}
A_{NX} Y= -A_{NY} X
\end{array}
\end{equation}
\end{lemma}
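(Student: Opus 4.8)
The plan is to read the identity off the tangential Gauss--Weingarten relation \eqref{e17} of Lemma 4.1, specialized to the anti-invariant distribution, and then to pair the resulting equation against $\mathcal{D}^{\perp}$. Throughout I take $X,Y\in\mathcal{D}^{\perp}$, which is the intended range of the statement; the point of this restriction is that for such vectors Lemma 3.3(a) gives $TX=TY=0$, so by \eqref{e6} we have $FX=NX$ and $FY=NY$.

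First I would substitute $U=Z$ (an arbitrary field in $TM$) and $V=Y\in\mathcal{D}^{\perp}$ into \eqref{e17}. Since $TY=0$ the term $\nabla_{Z}TY$ drops out and \eqref{e17} collapses to
\begin{equation}\nonumber
\begin{array}{c}
-A_{NY}Z=T\nabla_{Z}Y+t\,h(Z,Y).
\end{array}
\end{equation}
This is the workhorse identity. Next I would take the $g$-inner product of this relation with an arbitrary $X\in\mathcal{D}^{\perp}$ and analyse the two terms on the right separately. For the first term, Lemma 3.3 shows $\mathrm{Im}\,T\subseteq\mathcal{D}^{\theta}$, so $T\nabla_{Z}Y\in\mathcal{D}^{\theta}$ is orthogonal to $X$ and $g(T\nabla_{Z}Y,X)=0$. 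For the second term I would use the $F$-compatibility of the metric \eqref{e0} together with $F^{2}=I$ from \eqref{e00}, in the form $g(t\,\eta,X)=g(F\eta,X)=g(\eta,FX)$ for a normal field $\eta$, combined with $FX=NX$, to rewrite $g(t\,h(Z,Y),X)=g(h(Z,Y),NX)$. Hence $-g(A_{NY}Z,X)=g(h(Z,Y),NX)$.

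Finally I would convert both sides into inner products against $Z$ using \eqref{e4} and the self-adjointness of the shape operator (itself immediate from \eqref{e4} and the symmetry of $h$): the left side becomes $-g(A_{NY}X,Z)$ and the right side becomes $g(A_{NX}Y,Z)$. Since $Z\in TM$ is arbitrary, the identity $A_{NX}Y=-A_{NY}X$ of \eqref{e25} follows. I expect the main obstacle to be purely one of bookkeeping: keeping the signs from the Weingarten formula \eqref{e3} straight, and applying \eqref{e0} in exactly the right direction so that the $t\,h$ term lands on $NX$. It is worth noting that merely subtracting the two specializations $U=X,\,V=Y$ and $U=Y,\,V=X$ of \eqref{e17} yields instead the (equally valid) relation $A_{NX}Y-A_{NY}X=T[X,Y]$, which does not by itself produce the antisymmetric identity; it is precisely the pairing against $\mathcal{D}^{\perp}$, where the $T\nabla_{Z}Y$ term is annihilated, that isolates \eqref{e25}.
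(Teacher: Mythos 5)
Your proposal is correct and follows essentially the same route as the paper's proof: both specialize the tangential relation \eqref{e17} to an anti-invariant vector field (so its $T$-image vanishes), pair the result against $\mathcal{D}^{\perp}$ to annihilate the $T\nabla$ term, convert $g(t\,h(\cdot,\cdot),\cdot)$ into $g(h(\cdot,\cdot),N\cdot)$ via the $F$-compatibility of the metric, and finish with \eqref{e4}, self-adjointness of the shape operator, and the arbitrariness of the tangent vector. The only differences are notational (the roles of $X$ and $Y$ are interchanged and the arbitrary tangent field is called $Z$ rather than $U$), and your closing remark correctly identifies why mere antisymmetrization of \eqref{e17}, which yields $A_{NX}Y-A_{NY}X=T[X,Y]$ as in Theorem 4.6, is not enough.
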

for any $X$,$Y \in \mathcal{D}^{\perp}$.
\begin{proof} For any $X \in \mathcal{D}^{\perp}$ and $U\in TM$, using (3.7-$a$), we have
\begin{equation}\label{e26}
\begin{array}{c}
 -T{\nabla}_{U} X= A_{NX} U+ t\,h(U,X)
 \end{array}
\end{equation}
from (\ref{e17}). Let $Y$ be in $\mathcal{D}^{\perp}$. Using
(3.7-$b$), we obtain
\begin{equation}\label{e27}
\begin{array}{c}
0=-g(T{\nabla}_{U} X, Y) =g( A_{NX} U, Y)+ g(t h(U,X),Y)\,
\end{array}
\end{equation}
from (\ref{e26}). On the other hand, using (\ref{e0}), (\ref{e4}), (\ref{e6}) and (\ref{e7}), we find
\begin{equation}\label{e28}
\begin{array}{c}
g(t\, h(U,X),Y)=g( A_{NY} U, X).
\end{array}
\end{equation}
Thus, from (\ref{e27}) and (\ref{e28}), we deduce that
\begin{equation}
\begin{array}{c}
g( A_{NX} Y+ A_{NY} X, U)=0.
\end{array}
\end{equation}
This equation gives (\ref{e25}).
\end{proof}
\begin{theorem} Let $M$ be a hemi-slant submanifold of a l.p.R. manifold $\bar{M}$.
Then the anti-invariant distribution $\mathcal{D}^{\perp}$ is integrable if and only if
\begin{equation}
\begin{array}{c}
A_{NX} Y= A_{NY} X
\end{array}
\end{equation}
\end{theorem}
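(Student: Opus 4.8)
The plan is to detect the $\mathcal{D}^{\theta}$-part of a bracket $[X,Y]$, with $X,Y\in\mathcal{D}^{\perp}$, by applying the tangential operator $T$. Since $TM=\mathcal{D}^{\perp}\oplus\mathcal{D}^{\theta}$, the distribution $\mathcal{D}^{\perp}$ is integrable precisely when $[X,Y]$ has no component in $\mathcal{D}^{\theta}$. By Lemma 3.3 the operator $T$ annihilates $\mathcal{D}^{\perp}$ and carries $\mathcal{D}^{\theta}$ onto itself, and on $\mathcal{D}^{\theta}$ it satisfies $T^{2}=\cos^{2}\!\theta\,\mathrm{Id}$ by (\ref{e141}); since $\theta\neq\frac{\pi}{2}$ for a proper hemi-slant submanifold, $\cos^{2}\!\theta\neq 0$ and $T$ is injective on $\mathcal{D}^{\theta}$. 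Writing $[X,Y]=P+Q$ with $P\in\mathcal{D}^{\perp}$ and $Q\in\mathcal{D}^{\theta}$, I would note $T[X,Y]=TP+TQ=TQ$, so that $T[X,Y]=0$ forces $Q=0$. This yields the key reduction $[X,Y]\in\mathcal{D}^{\perp}\iff T[X,Y]=0$.

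The next step is to evaluate $T[X,Y]$ from the structure equation (\ref{e17}). Applying (\ref{e17}) once with $(U,V)=(X,Y)$ and once with $(U,V)=(Y,X)$, and using $TX=TY=0$ from Lemma 3.3, I obtain $-A_{NY}X=T\nabla_{X}Y+t\,h(X,Y)$ and $-A_{NX}Y=T\nabla_{Y}X+t\,h(Y,X)$. Subtracting the second from the first and invoking the symmetry of the second fundamental form $h$ to cancel the $t\,h$ terms, the torsion-free identity $\nabla_{X}Y-\nabla_{Y}X=[X,Y]$ gives $T[X,Y]=A_{NX}Y-A_{NY}X$.

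Combining the two steps finishes the argument: $\mathcal{D}^{\perp}$ is integrable if and only if $T[X,Y]=0$ for all $X,Y\in\mathcal{D}^{\perp}$, which by the displayed identity is equivalent to $A_{NX}Y=A_{NY}X$. I expect the only genuine subtlety to be the first step, namely the justification that the vanishing of $T[X,Y]$ detects integrability; this rests on the injectivity of $T$ on the slant distribution, and hence on the properness hypothesis $\theta\neq\frac{\pi}{2}$ (for $\theta=\frac{\pi}{2}$ one has $T\equiv 0$ and the criterion degenerates). The evaluation of $T[X,Y]$ itself is a routine manipulation of (\ref{e17}). I also note that Lemma 4.6 is not needed for this proof, although combined with the theorem it shows that integrability of $\mathcal{D}^{\perp}$ is in fact equivalent to $A_{NX}Y=0$.
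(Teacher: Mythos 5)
Your argument is essentially the paper's own proof: both apply (4.1) twice, use $T\mathcal{D}^{\perp}=\{0\}$ together with the symmetry of $h$ and torsion-freeness to obtain $T[X,Y]=A_{NX}Y-A_{NY}X$, and then reduce integrability of $\mathcal{D}^{\perp}$ to the vanishing of $T[X,Y]$. If anything, yours is the more careful version: the paper justifies the equivalence between integrability and $T[X,Y]=0$ merely by citing (3.7-a), whereas you make explicit the injectivity of $T$ on $\mathcal{D}^{\theta}$ via $T^{2}=\cos^{2}\!\theta\,I$, which is precisely where the restriction $\theta\neq\frac{\pi}{2}$ enters.
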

for all $X$, $Y \in \mathcal{D}^{\perp}$.
\begin{proof} From (\ref{e17}), using (3.7-$a$), we have
\begin{equation}\label{e31}
\begin{array}{c}
-A_{NY} X= T {\nabla}_{X} Y +t\,h(X,Y)
\end{array}
\end{equation}
 for all $X \in \mathcal{D}^{\perp}$. By interchanging $X$ and $Y$ in (\ref{e31}),
 then subtracting it from (\ref{e31}) we obtain
\begin{equation}\label{e32}
\begin{array}{c}
A_{NX} Y- A_{NY} X=T[X,Y]
\end{array}.
\end{equation}
Because of (3.7-$a$), we know that $\mathcal{D}^{\perp}$ is
integrable if and only if $T[X,Y]=0$ for all $X$,$Y \in
\mathcal{D}^{\perp}$. So, our assertion comes from (\ref{e32}).
\end{proof}
\noindent By Lemma 4.5 and Theorem 4.6, we have the following
result.
\begin{cor} Let $M$ be a hemi-slant submanifold of a l.p.R. manifold $\overline{M}$.
Then the anti-invariant distribution $\mathcal{D}^{\perp}$ is integrable if and only if
\begin{equation}\label{e33}
\begin{array}{c}
 A_{NX} Y= 0
 \end{array}
\end{equation}
\end{cor}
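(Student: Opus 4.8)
The plan is to combine the skew-symmetry relation of Lemma 4.5 with the integrability criterion of Theorem 4.6, both of which are available for arbitrary $X, Y \in \mathcal{D}^{\perp}$. Theorem 4.6 tells us that $\mathcal{D}^{\perp}$ is integrable precisely when $A_{NX}Y = A_{NY}X$, while Lemma 4.5 supplies the unconditional identity $A_{NX}Y = -A_{NY}X$. The key idea is simply that these two relations, holding simultaneously, force both sides to vanish.

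For the forward direction I would assume that $\mathcal{D}^{\perp}$ is integrable, so that Theorem 4.6 gives $A_{NX}Y = A_{NY}X$. Feeding this into the identity of Lemma 4.5 yields $A_{NX}Y = -A_{NY}X = -A_{NX}Y$, whence $2A_{NX}Y = 0$ and therefore $A_{NX}Y = 0$. For the converse I would assume $A_{NX}Y = 0$ for all $X, Y \in \mathcal{D}^{\perp}$; applying Lemma 4.5 with the roles of $X$ and $Y$ interchanged gives $A_{NY}X = -A_{NX}Y = 0$ as well, so that $A_{NX}Y = A_{NY}X$ holds trivially, and Theorem 4.6 then delivers the integrability of $\mathcal{D}^{\perp}$.

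Since both ingredients are already established, there is no genuine obstacle here; the statement is a purely algebraic consequence of the preceding results. The only point meriting slight care is to observe that both Lemma 4.5 and Theorem 4.6 are stated for every pair of vector fields in $\mathcal{D}^{\perp}$, so that the derived conclusion $A_{NX}Y = 0$ is indeed valid for all such $X$ and $Y$, and not merely for a distinguished pair.
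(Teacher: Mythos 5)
Your proposal is correct and follows exactly the paper's own route: the paper derives this corollary precisely by combining Lemma 4.5 (the unconditional skew-symmetry $A_{NX}Y=-A_{NY}X$) with the integrability criterion of Theorem 4.6, just as you do. The algebraic step forcing $A_{NX}Y=0$ and the trivial converse are exactly what is intended there.
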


for all $X$, $Y \in \mathcal{D}^{\perp}$.\\

\noindent Now, we give main result of this section.
\begin{theorem} Let $M$ be a hemi-slant submanifold of a l.p.R. manifold $\bar{M}$.
Then the anti-invariant distribution $\mathcal{D}^{\perp}$ is always integrable.
\end{theorem}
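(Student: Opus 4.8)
The plan is to reduce everything to Corollary 4.7: since $\mathcal{D}^{\perp}$ is integrable if and only if $A_{NX}Y=0$ for all $X,Y\in\mathcal{D}^{\perp}$, it suffices to prove this identity with no extra hypothesis on $M$. As a first step I would combine Lemma 4.5 with the identity $A_{NX}Y-A_{NY}X=T[X,Y]$ established in the proof of Theorem 4.6; this yields $2A_{NX}Y=T[X,Y]$, and since $T(\mathcal{D}^{\perp})=\{0\}$ and $T(\mathcal{D}^{\theta})=\mathcal{D}^{\theta}$ by Lemma 3.3, it shows $A_{NX}Y\in\mathcal{D}^{\theta}$. Consequently it is enough to check that $g(A_{NX}Y,Z)=0$ for every $Z\in\mathcal{D}^{\theta}$.

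For this I would start from $g(A_{NX}Y,Z)=g(h(Y,Z),NX)=g(h(Y,Z),FX)$, using $FX=NX$ (because $TX=0$ by Lemma 3.3) together with (2.6). Replacing $h(Y,Z)$ by the normal part of $\overline{\nabla}_{Z}Y$ and carrying $F$ across by means of (2.2), (2.3) and the decomposition (2.11), I expect to reach a combination of a tangential term of the form $g(A_{NY}X,TZ)$ and a normal-connection term of $g(\nabla^{\perp}_{X}NY,NZ)$-type. The latter I would rewrite using the normal identity (4.2) of Lemma 4.1 and the relations (3.3), expanding $\omega\,h$ and $t\,h$ and using $FNZ=\sin^{2}\!\theta\,Z-NTZ$ (a consequence of (2.12), (3.3) and (3.4)). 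The slant relation (3.6) then converts all surviving $N(\mathcal{D}^{\theta})$-inner products into $\mathcal{D}^{\theta}$-inner products, and the terms built from $A_{NTZ}$ should collect into $g(A_{NTZ}X,Y)-g(A_{NTZ}Y,X)$, which vanishes because each shape operator is self-adjoint by (2.6).

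The step I expect to be the main obstacle is arranging this computation so that it produces genuinely new information instead of merely recovering Lemma 4.5. A single, direct transfer of $F$ across $g(h(Y,Z),FX)$ only reproduces the antisymmetry $A_{NX}Y=-A_{NY}X$, so the tangential identity (4.1) and the normal identity (4.2) must be used together, feeding $h(X,Y)$ through both $t$ and $\omega$ and exploiting (3.4)-(3.6); the aim is to isolate a factor $\sin^{2}\!\theta$ in front of $g(A_{NX}Y,Z)$. Since $M$ is proper we have $\sin^{2}\!\theta\neq 0$, so dividing out would give $g(A_{NX}Y,Z)=0$, whence $A_{NX}Y=0$ by the first step and $\mathcal{D}^{\perp}$ is integrable by Corollary 4.7. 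Keeping careful track of the $F(\mathcal{D}^{\perp})$-, $N(\mathcal{D}^{\theta})$- and $\mu$-components of the second fundamental form throughout (using Lemma 3.2 and the splitting (3.2)), so that the bookkeeping genuinely yields such a factor rather than collapsing into an identity carrying no new information, is the delicate point on which the argument hinges.
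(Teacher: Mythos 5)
Your opening reduction is sound and matches material already in the paper: combining Lemma 4.5 with the identity $A_{NX}Y-A_{NY}X=T[X,Y]$ from the proof of Theorem 4.6 gives $2A_{NX}Y=T[X,Y]\in\mathcal{D}^{\theta}$ by Lemma 3.3, so by Corollary 4.7 everything reduces to proving $g(A_{NX}Y,Z)=0$ for $Z\in\mathcal{D}^{\theta}$. But precisely there the proposal stops being a proof: the computation that is supposed to place a factor $\sin^{2}\!\theta$ in front of $g(A_{NX}Y,Z)$ is never exhibited, and it cannot be obtained from the tools you list. Every manipulation you invoke --- carrying $F$ across $g(h(Y,Z),FX)$ via (2.3)--(2.6), feeding $h$ through $t$ and $\omega$ via (4.1)--(4.2), simplifying with (3.3)--(3.6) and self-adjointness of $A$ --- is a pointwise algebraic consequence of the Gauss--Weingarten formulas together with $\overline{\nabla}F=0$, and if you actually carry these routes out you find they return only three things: the antisymmetry of Lemma 4.5; the identity $2g(A_{NX}Y,Z)=g(T[X,Y],Z)$, i.e.\ (4.20) again; and relations in which the trigonometric coefficient multiplies \emph{connection} terms rather than shape-operator terms, e.g.\ $\cos^{2}\!\theta\,g(\nabla_{X}Y,Z)=-g(A_{NY}TZ,X)-g(A_{NTZ}Y,X)$, which is literally the computation in the proof of Theorem 5.2 --- and there it yields only a \emph{criterion} (for $\mathcal{D}^{\perp}$ to define a totally geodesic foliation), not an unconditional vanishing. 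The collapse you yourself flag as the ``delicate point'' is therefore unavoidable: $g(A_{NX}Y,Z)=\tfrac{1}{2}\,g(T[X,Y],Z)$ \emph{is} the obstruction to integrability, so no combination of these pointwise identities can kill it; some input beyond shape-operator algebra is required. Note also that your argument invokes properness twice (Lemma 3.3 and $\sin\theta\neq 0$), while the theorem is stated for all hemi-slant submanifolds, including the semi-invariant case $\theta=0$.

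The paper's proof is of an entirely different kind and never touches shape operators: it sets $\Omega(\bar U,\bar V)=g(F\bar U,\bar V)$, notes that (2.3) makes $\Omega$ parallel, invokes the exterior-derivative-type identity $3\,d\Omega=\mathcal{G}(\nabla\Omega)=0$ (Tripathi's technique), and then evaluates the alternating-sum formula for $3\,d\Omega(U,X,Y)$ with $X,Y\in\mathcal{D}^{\perp}$; all derivative terms such as $X\Omega(Y,U)$ vanish because the functions $\Omega(Y,U)=g(NY,U)$ are identically zero along $M$, the bracket terms other than $\Omega([X,Y],U)$ vanish because $T(TM)\subseteq\mathcal{D}^{\theta}$, and what survives is $g(T[X,Y],U)=0$, which is the conclusion. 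The essential ingredient relative to your toolkit is the identity $3\,d\Omega=\mathcal{G}(\nabla\Omega)$ itself; nothing in your list implies it, and it is exactly the non-pointwise input your bookkeeping lacks. A word of caution if you try to rebuild the proof along the paper's lines: that identity is the standard one for alternating $2$-forms, whereas this $\Omega$ is \emph{symmetric}; if one expands the alternating-sum formula for a symmetric parallel tensor using Gauss--Weingarten, it evaluates to $-2g(A_{NX}Y,U)$ --- precisely the quantity whose vanishing is at stake --- so justifying (or repairing) that step is where the real mathematical work lies, not in a cleverer arrangement of the identities (3.3)--(3.6) and (4.1)--(4.2).
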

\begin{proof} Let $\bar{M}$ be a l.p.R. manifold with Riemannian metric $g$ and almost product structure $F$.
Define the symmetric (0,2)-type tensor field $\Omega$ by $\Omega
(\bar{U},\bar{V})= g(F\bar{U},\bar{V})$ on the tangent bundle $T
\bar{M}$. It is not difficult to see that $(\nabla_{\bar{U}} \Omega)
(\bar{V},\bar{W})=g((\nabla_{\bar{U}}F )\bar{V},\bar{W}) $ on $T
\bar{M}$. Thus, because  of (\ref{e1}), we deduce that
\begin{equation}\nonumber
\begin{array}{c}
3 \,d \Omega(\bar{V},\bar{W},\bar{U})= {\mathcal{G}}
(\nabla_{\bar{U}} \Omega) (\bar{V},\bar{W})=0
\end{array}
\end{equation}
for all $\bar{U}, \bar{V}, \bar{W} \in T \bar{M}$, that is,  $
d\Omega \equiv 0$\,, where ${\mathcal{G}}$ denotes the cyclic sum
over $\bar U, \bar V, \bar W \in T{\bar M}$.
\noindent Next, for any $X$, $Y \in \mathcal{D}^{\perp}$ and $U\in TM$
we have
\begin{eqnarray}\nonumber
&&0= 3\, d \Omega(U,X,Y)= U\, \Omega(X,Y)+ X\, \Omega(Y,U)+ Y\, \Omega(U,X)\nonumber\\[.2cm]
&&\qquad\qquad\qquad\qquad-\Omega([U,X],Y)-\Omega([X,Y],U)-\Omega([Y,U],X)\nonumber\\[.2cm]
&&\qquad\qquad\qquad\qquad=g(T[Y,X],U])\,.\nonumber
\end{eqnarray}
It follows that $T[X,Y]= 0$ and because of (3.7-$a$), $[Y,X] \in
\mathcal{D}^{\perp}$\,.
\end{proof}
We remark that we used Tripathi's technique [8] in the proof above.
\begin{cor} Let $M$ be a hemi-slant submanifold of a l.p.R. manifold $\bar{M}$. Then the following facts hold:
\begin{eqnarray}
&A_{{N{D^{\perp}}}} {D^{\perp}}=0 \label{e304}\\[.2cm]
&A_{NX} Z\in {D^{\theta}},\,\,\quad \text{i.e.,}\,\,\, A_{{N{D^{\perp}}}} {D^{\theta}} \subseteq {D^{\theta}} \label{e305}
\end{eqnarray}
\rm{and}
\begin{equation}
g(h(TM,{\mathcal{D}^{\perp}}), N{\mathcal{D}^{\perp}})=0\,,\label{e306}
\end{equation}
\end{cor}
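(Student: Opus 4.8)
Corollary 4.8 establishes three consequences of the integrability results just proven, so my plan is to read off each part from the machinery already assembled in Section 4.

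The plan is to derive the three assertions in order, leaning on Theorem 4.7, Corollary 4.4, and Lemma 4.5. For \eqref{e304}, I would argue as follows. Theorem 4.7 states that $\mathcal{D}^{\perp}$ is \emph{always} integrable. Corollary 4.4 says that integrability of $\mathcal{D}^{\perp}$ is equivalent to $A_{NX}Y=0$ for all $X,Y\in\mathcal{D}^{\perp}$. Combining these two facts immediately yields $A_{NX}Y=0$ for all $X,Y\in\mathcal{D}^{\perp}$, which is precisely \eqref{e304} written in the distributional notation $A_{N\mathcal{D}^{\perp}}\mathcal{D}^{\perp}=0$. So this part is essentially a one-line invocation of earlier results.

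For \eqref{e305}, I would show that $A_{NX}Z$ lies in $\mathcal{D}^{\theta}$ whenever $X\in\mathcal{D}^{\perp}$ and $Z\in\mathcal{D}^{\theta}$; equivalently, that $A_{NX}Z$ is orthogonal to $\mathcal{D}^{\perp}$. The natural route is to test $g(A_{NX}Z,Y)$ against an arbitrary $Y\in\mathcal{D}^{\perp}$ and show it vanishes. Using the defining relation \eqref{e4}, I would rewrite $g(A_{NX}Z,Y)=g(h(Z,Y),NX)=g(A_{NX}Y,Z)$ by symmetry of $h$ and the shape-operator identity; then \eqref{e304}, just established, gives $A_{NX}Y=0$, so the whole expression is zero. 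Hence $A_{NX}Z\perp\mathcal{D}^{\perp}$, and since $A_{NX}Z\in TM=\mathcal{D}^{\perp}\oplus\mathcal{D}^{\theta}$, it must lie in $\mathcal{D}^{\theta}$, giving $A_{N\mathcal{D}^{\perp}}\mathcal{D}^{\theta}\subseteq\mathcal{D}^{\theta}$.

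For \eqref{e306}, the claim is that $g(h(U,X),NY)=0$ for all $U\in TM$ and all $X,Y\in\mathcal{D}^{\perp}$. Again applying \eqref{e4}, I would rewrite $g(h(U,X),NY)=g(A_{NY}X,U)$, and then invoke \eqref{e304} (which, by the symmetry $A_{NX}Y=A_{NY}X$ appearing in Lemma 4.5 and Theorem 4.6, also forces $A_{NY}X=0$) to conclude the quantity vanishes for every $U\in TM$. Thus all three parts reduce to the vanishing statement \eqref{e304} together with the shape-operator symmetry \eqref{e4}. I do not anticipate a genuine obstacle here: once \eqref{e304} is in hand from the preceding theorems, parts \eqref{e305} and \eqref{e306} follow by routine bilinear manipulations. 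The only point requiring mild care is bookkeeping of which vector belongs to which distribution when expanding $g(A_{\xi}U,V)=g(h(U,V),\xi)$, and ensuring the arbitrary test vector $Y\in\mathcal{D}^{\perp}$ is used consistently.
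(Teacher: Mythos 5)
Your proposal is correct and follows essentially the same route as the paper: \eqref{e304} is obtained by combining the theorem that $\mathcal{D}^{\perp}$ is always integrable with the corollary characterizing integrability by $A_{NX}Y=0$, and then \eqref{e305} and \eqref{e306} are read off from \eqref{e304} via the relation $g(h(U,V),\xi)=g(A_{\xi}U,V)$ and the symmetry of $h$, exactly as the paper's (much terser) proof indicates. The only cosmetic slip is your parenthetical appeal to ``the symmetry $A_{NX}Y=A_{NY}X$'' from the earlier lemma (which actually gives anti-symmetry); it is unnecessary anyway, since \eqref{e304} already yields $A_{NY}X=0$ directly for $X,Y\in\mathcal{D}^{\perp}$.
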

where $X\in \mathcal{D}^{\perp}$ and $Z\in\mathcal{D}^{\theta}$.
\begin{proof} (\ref{e304}) follows from Corollary 4.7 and Theorem 4.8. (\ref{e305}) follows from (\ref{e304}).
Finally, using (\ref{e4}), (\ref{e304}) gives (\ref{e306}).
\end{proof}
Next, we give another application of Theorem 4.8.
\begin{theorem} Let $M$ be a proper hemi-slant submanifold of a l.p.R. manifold $\bar{M}$.
The anti-invariant distribution $\mathcal{D}^{\perp}$ defines a totally geodesic foliation on $M$ if and ony if
$h(\mathcal{D}^{\perp},\mathcal{D}^{\perp}) {\perp} \,\,N {\mathcal{D}^{\theta}}$.
\end{theorem}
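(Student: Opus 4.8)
The plan is to reduce the statement to a single tensorial identity. Since $\mathcal{D}^{\perp}$ is integrable by Theorem 4.8, it defines a foliation on $M$, and this foliation is totally geodesic exactly when each leaf is a totally geodesic submanifold of $M$, i.e.\ when $\nabla_{X}Y\in\mathcal{D}^{\perp}$ for all $X,Y\in\mathcal{D}^{\perp}$. Using the orthogonal splitting $TM=\mathcal{D}^{\perp}\oplus\mathcal{D}^{\theta}$, this is equivalent to $g(\nabla_{X}Y,Z)=0$ for all $X,Y\in\mathcal{D}^{\perp}$ and all $Z\in\mathcal{D}^{\theta}$. So I would first establish this reformulation, and then compute $g(\nabla_{X}Y,Z)$ explicitly.

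The main computation rests on the observation that, since $M$ is proper, $T$ restricts to an invertible endomorphism of $\mathcal{D}^{\theta}$ (by Lemma 3.3(b) together with $T^{2}Z=\cos^{2}\!\theta\,Z$ from (\ref{e141}) and $\cos\theta\neq0$). Thus it suffices to test the condition on vectors of the form $TW$ with $W\in\mathcal{D}^{\theta}$, and I would compute $g(\nabla_{X}Y,TW)$. Replacing $\nabla_{X}Y$ by $\overline{\nabla}_{X}Y$ (allowed because $h(X,Y)$ is normal) and writing $TW=FW-NW$ from (\ref{e6}) gives
\[
g(\nabla_{X}Y,TW)=g(\overline{\nabla}_{X}Y,FW)-g(h(X,Y),NW),
\]
the last term arising because $NW$ is normal. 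For the first term I would invoke the l.p.R.\ structure: metric compatibility (\ref{e0}) with $F^{2}=I$ gives $g(\overline{\nabla}_{X}Y,FW)=g(F\overline{\nabla}_{X}Y,W)$, and the parallelism $\overline{\nabla}F=0$ from (\ref{e1}) rewrites $F\overline{\nabla}_{X}Y=\overline{\nabla}_{X}(FY)=\overline{\nabla}_{X}(NY)$, using $FY=NY$ for the anti-invariant $Y$ (Lemma 3.3(a)). The Weingarten formula (\ref{e3}) then reduces this to $-g(A_{NY}X,W)$.

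At this point the decisive ingredient is Corollary 4.9: since $A_{N\mathcal{D}^{\perp}}\mathcal{D}^{\perp}=0$ by (\ref{e304}), we have $A_{NY}X=0$, so the first term vanishes and I obtain the clean identity $g(\nabla_{X}Y,TW)=-g(h(X,Y),NW)$. This collapse of the $F$-invariant part is the step I expect to be the crux; everything else is bookkeeping with the Gauss--Weingarten formulas. Finally, since $W\mapsto TW$ is a bijection of $\mathcal{D}^{\theta}$ and $N$ maps $\mathcal{D}^{\theta}$ onto $N\mathcal{D}^{\theta}$, the left-hand side vanishes for all $W$ precisely when $\mathcal{D}^{\perp}$ is totally geodesic, while the right-hand side vanishes for all $W$ precisely when $h(\mathcal{D}^{\perp},\mathcal{D}^{\perp})\perp N\mathcal{D}^{\theta}$. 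Equating the two conditions proves the theorem.
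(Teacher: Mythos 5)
Your proposal is correct and follows essentially the same route as the paper: both hinge on Corollary 4.9 (the unconditional vanishing $A_{NY}X=0$ for $X,Y\in\mathcal{D}^{\perp}$) together with the surjectivity of $T$ on $\mathcal{D}^{\theta}$, and both arrive at the same identity --- yours reads $g(\nabla_X Y,TW)=-g(h(X,Y),NW)$, while the paper writes $g(\nabla_X Y,Z)=-\cos^{-2}\!\theta\, g(h(X,Y),NTZ)$, which coincides with it under the substitution $Z=TW$. The only difference is bookkeeping: the paper channels the computation through its Lemma 4.1 and the identity $g(TZ,TW)=\cos^{2}\!\theta\, g(Z,W)$, whereas you recompute the relevant terms directly from the Gauss--Weingarten formulas and $\overline{\nabla}F=0$.
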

\begin{proof} For $X,Y\in  \mathcal{D}^{\perp}$, we put $\nabla_X Y =\, ^{\perp}\nabla_X Y +\, ^{\theta}\nabla_X Y$,
where $^{\perp} \nabla_X Y$ (resp. $^{\theta}\nabla_X Y$) denotes
the anti-invariant (resp. slant) part of $\nabla_X Y$. Then using
Lemma 3.3 and (3.5), for any $Z\in\mathcal{D}^{\theta}$  we have
\begin{equation}\label{e139}
\begin{array}{c}
g(\nabla_X Y,Z)= g(^\theta\nabla_X Y, Z)= {1\over
{\cos^2\!\theta}}\, g( T ^\theta{\nabla_X} Y, TZ)= {1\over
\cos^2\!\theta}\, g( T {\nabla_X} Y, TZ)\,.
\end{array}
\end{equation}
On the other hand, from (4.1), we have
\begin{equation}\label{e38}
\begin{array}{c}
T \nabla_X Y +t\, h(X,Y)=-A_{NY} X= 0
\end{array},
\end{equation}
since the distribution $\mathcal{D}^{\perp}$ is integrable. So,
using (4.26), from (\ref{e139}), we get
\begin{equation}\label{40}
\begin{array}{c}
g(\nabla_X Y,Z)= -{1\over {\cos^2\!\theta}}\, g(t\, h(X,Y),TZ )=
-{1\over {\cos^2\!\theta}}\, g(F h(X,Y),TZ )\,.
\end{array}
\end{equation}
Here, using (\ref{e0}), (\ref{e6}) and (\ref{e141}), we find
\begin{equation}\label{41}
\begin{array}{c}
g(F\,h(X,Y),TZ) = g(h(X,Y),NTZ ).
\end{array}
\end{equation}
From (\ref{40}) and (\ref{41}), we get
\begin{equation}\label{e400}
\begin{array}{c}
g(\nabla_X Y,Z)= -{1\over {\cos^2\!\theta}}\, g( h(X,Y),NTZ )\,.
\end{array}
\end{equation}
Since $TZ\in \mathcal{D}^{\theta}$, our assertion comes from
(\ref{e400}).
\end{proof}
\section{Hemi-slant product}
In this section, we give a necessary and sufficient condition for a
proper hemi-slant submanifold to be a hemi-slant product.
\begin{definition} A proper hemi-slant submanifold $M$ of a l.p.R. manifold $\bar{M}$ is
called a hemi-slant product if it is locally product Riemannian of
an anti-invariant submanifold $M_ {\perp}$ and a proper slant
submanifold $M_ {\theta}$ of $\bar{M}$.

Now, we are going to examine the problem when a proper hemi-slant
submanifold of a l.p.R. manifold is a hemi-slant product?
\end{definition}
We first give a result which is equivalent to Theorem 4.10.
\begin{theorem} Let $M$ be a proper hemi-slant submanifold of a l.p.R. manifold $\bar{M}$.
Then the anti-invariant $\mathcal{D}^{\perp}$ defines a totally geodesic foliation on $M$ if and only if
\begin{equation} \label{e47}
\begin{array}{c}
g(A_{NY} Z, X)= -g(A_{NZ} Y,X),
\end{array}
\end{equation}
where $X$, $Y \in \mathcal{D}^{\perp}$ and $ Z \in \mathcal{D}^{\theta}$.
\end{theorem}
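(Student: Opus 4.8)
The plan is to establish the equivalence by connecting the foliation condition directly to the algebraic relation, using Theorem 4.10 as the bridge. Recall that Theorem 4.10 already characterizes when $\mathcal{D}^{\perp}$ defines a totally geodesic foliation, namely by the condition $h(\mathcal{D}^{\perp},\mathcal{D}^{\perp}) \perp N\mathcal{D}^{\theta}$. So my strategy is to show that this orthogonality condition is equivalent to the stated relation $g(A_{NY}Z,X)=-g(A_{NZ}Y,X)$, for $X,Y\in\mathcal{D}^{\perp}$ and $Z\in\mathcal{D}^{\theta}$. Since the foliation is totally geodesic precisely when $\nabla_X Y\in\mathcal{D}^{\perp}$ for all $X,Y\in\mathcal{D}^{\perp}$, i.e. when $g(\nabla_X Y,Z)=0$ for all $Z\in\mathcal{D}^{\theta}$, the key is to rewrite $g(\nabla_X Y,Z)$ in terms of shape operators.

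First I would start from equation (4.30) in the proof of Theorem 4.10, which gives
\begin{equation}\nonumber
\begin{array}{c}
g(\nabla_X Y,Z)= -{1\over {\cos^2\!\theta}}\, g(h(X,Y),NTZ)\,.
\end{array}
\end{equation}
Next I would convert the right-hand side into shape-operator form using the fundamental relation (2.5), namely $g(h(X,Y),NTZ)=g(A_{NTZ}X,Y)$, and then aim to relate $A_{NTZ}$ to the operators $A_{NY}$ and $A_{NZ}$ appearing in the statement. The cleaner route is probably to work symmetrically: I would compute $g(A_{NY}Z,X)$ and $g(A_{NZ}Y,X)$ separately, each via (2.5) as $g(h(Z,X),NY)$ and $g(h(Y,X),NZ)$ respectively, and examine their sum. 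The point is that the condition $h(X,Y)\perp N\mathcal{D}^{\theta}$ means $g(h(X,Y),NZ)=0$ for all $Z\in\mathcal{D}^{\theta}$, and I expect the antisymmetric combination $g(A_{NY}Z,X)+g(A_{NZ}Y,X)$ to collapse to something proportional to $g(h(X,Y),NZ)$ after applying Lemma 4.5 (which gives $A_{NX}Y=-A_{NY}X$ on $\mathcal{D}^{\perp}$) together with the symmetry of $h$.

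The main obstacle will be bookkeeping the tangential and normal decompositions correctly so that the two displayed shape-operator terms really do combine into a single expression controlled by $g(h(\mathcal{D}^{\perp},\mathcal{D}^{\perp}),N\mathcal{D}^{\theta})$. Concretely, I must be careful that $A_{NZ}Y$ is evaluated with $Z\in\mathcal{D}^{\theta}$ but $Y\in\mathcal{D}^{\perp}$, so the mixed-index relations from Corollary 4.9, especially $A_{N\mathcal{D}^{\perp}}\mathcal{D}^{\theta}\subseteq\mathcal{D}^{\theta}$ and $A_{N\mathcal{D}^{\perp}}\mathcal{D}^{\perp}=0$, may be needed to discard terms that lie in the wrong distribution. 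Once the combination $g(A_{NY}Z,X)+g(A_{NZ}Y,X)$ is shown to equal (up to the nonzero factor $\cos^2\!\theta$, which is legitimate since $M$ is proper so $\theta\neq\pi/2$) the quantity $g(h(X,Y),NZ)$ with $Z$ replaced by $TZ$, the equivalence with Theorem 4.10 follows immediately: the relation (5.2) holds for all admissible $X,Y,Z$ if and only if $g(h(X,Y),N\mathcal{D}^{\theta})=0$, which is exactly the condition of Theorem 4.10. I would close by invoking Theorem 4.10 to conclude both directions of the equivalence.
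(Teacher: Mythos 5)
Your proposal is correct, and it takes a genuinely different route from the paper's. The paper proves this theorem from scratch, never invoking Theorem 4.10: it expands $g(\nabla_X Y,Z)=g(\overline{\nabla}_X FY,FZ)$ via the Gauss and Weingarten formulas together with the identities (3.3)--(3.4), reaches the intermediate formula $\cos^2\!\theta\, g(\nabla_X Y,Z)=-g(A_{NY}TZ,X)-g(A_{NTZ}Y,X)$, and then trades $Z$ for $TZ$ (legitimate because $T(\mathcal{D}^{\theta})=\mathcal{D}^{\theta}$ by Lemma 3.3 and $\cos^2\!\theta\neq 0$ on a proper submanifold) to obtain (5.2). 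You instead treat Theorem 4.10 as a black box and reduce (5.2) to its condition $h(\mathcal{D}^{\perp},\mathcal{D}^{\perp})\perp N\mathcal{D}^{\theta}$; this works, and in fact more cleanly than you anticipate: by Corollary 4.9 (which rests on Lemma 4.5 and Theorem 4.8) one has $A_{NY}X=0$ for $X,Y\in\mathcal{D}^{\perp}$, so self-adjointness gives $g(A_{NY}Z,X)=g(Z,A_{NY}X)=0$ identically; hence the left-hand side of (5.2) always vanishes and (5.2) collapses to $0=g(A_{NZ}Y,X)=g(h(X,Y),NZ)$ for all $Z\in\mathcal{D}^{\theta}$, which is precisely Theorem 4.10's condition --- no $\cos^2\!\theta$ factor and no substitution of $TZ$ for $Z$ is needed at this stage (those enter only if you route through the final display of the proof of Theorem 4.10, which is (4.29), not (4.30) as you cite, and there Lemma 3.3(b) converts the $NTZ$-condition into the $NZ$-condition). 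As for what each approach buys: the paper's computation is self-contained and yields a reusable identity for $\cos^2\!\theta\,g(\nabla_X Y,Z)$ in terms of shape operators, while yours is shorter, recycles the results of Section 4, and makes transparent the structural reason the paper can announce Theorem 5.2 as \emph{equivalent} to Theorem 4.10: one side of condition (5.2) is identically zero, so (5.2) is nothing but the orthogonality condition of Theorem 4.10 rewritten through shape operators. (One terminological slip: the sum $g(A_{NY}Z,X)+g(A_{NZ}Y,X)$ you form is not an antisymmetric combination, though nothing in your argument depends on that label.)
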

\begin{proof} For any  $X$, $Y \in \mathcal{D}^{\perp}$ and $Z\in \mathcal{D}^{\theta}$,
using (\ref{e2}), (\ref{e0}), and (\ref{e1}), we have
\begin{equation}\nonumber \label{}
\begin{array}{c}
g(\nabla_X Y,Z)=g(\overline{\nabla}_X Y,Z)=g(\overline {\nabla}_X FY,FZ).
\end{array}
\end{equation}
Hence, using (\ref{e6}), (\ref{e2}), (\ref{e3}) and (\ref{e0}), we
obtain
\begin{equation}\nonumber \label{}
\begin{array}{c}
g(\nabla_X Y,Z)=-g(A_{NY} X, TZ)+g({\nabla}_X Y,FNZ)+ g(h(X,Y),FNZ).
\end{array}
\end{equation}
Here, using (3.3)-c, (3.3)-a, (2.12) and (3.4), we have\\

$FNZ=tNZ-NTZ$ and $tNZ=Z-T^2 Z=\sin^2\!{\theta} Z$. Thus, with the
help of (2.6), we get
\begin{eqnarray}\nonumber \label{}
g(\nabla_X Y,Z)=-g(A_{NY}X,TZ)+ \sin^2\!{\theta}g({\nabla}_X
Y,Z)-g(A_{NTZ} Y,X).
\end{eqnarray}
After some calculations, we find
\begin{equation}\nonumber \label{}
\begin{array}{c}
\cos^2\!{\theta}g(\nabla_X Y,Z)=-g(A_{NY} TZ, X)-g(A_{NTZ} Y,X).
\end{array}
\end{equation}
It follows that the distribution $\mathcal{D}^{\perp}$ defines a totally geodesic foliation on $M$ if and only if
\begin{equation} \label{e48}
\begin{array}{c}
g(A_{NY} TZ, X)= -g(A_{NTZ} Y,X).
\end{array}
\end{equation}
Putting $Z=TZ$ in (\ref{e48}), we obtain (\ref{e47}) and vice versa.
\end{proof}
\begin{theorem} Let $M$ be a proper hemi-slant submanifold of a l.p.R. manifold $\bar{M}$.
Then the distribution $\mathcal{D}^{\theta}$ defines a totally geodesic foliation on $M$ if and only if
\begin{equation} \label{e49}
\begin{array}{c}
g(A_{NX} W, Z)= -g(A_{NW} X,Z),
\end{array}
\end{equation}
where $X$, $Y \in \mathcal{D}^{\perp}$ and $ Z, W \in \mathcal{D}^{\theta}$.
\end{theorem}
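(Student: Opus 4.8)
The plan is to mirror the proof of Theorem 5.2, but working on the slant side. The key observation is that $\mathcal{D}^{\theta}$ defines a totally geodesic foliation on $M$ exactly when $\nabla_Z W \in \mathcal{D}^{\theta}$ for all $Z,W \in \mathcal{D}^{\theta}$, which is equivalent to $g(\nabla_Z W, X)=0$ for every $X \in \mathcal{D}^{\perp}$. So the entire argument reduces to finding a workable expression for $g(\nabla_Z W, X)$ in terms of shape operators.

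The crucial move is to apply Lemma 4.1, equation (\ref{e17}), not with $V=W$ but with $V=TW$; this lets me invoke $T^2 W=\cos^2\!\theta\, W$ from (\ref{e141}) so that $\nabla_Z W$ itself reappears with a factor $\cos^2\!\theta$. Explicitly, (\ref{e17}) with $U=Z$ and $V=TW$ reads $\cos^2\!\theta\,\nabla_Z W - A_{NTW}Z = T\nabla_Z TW + t\,h(Z,TW)$. Pairing with $X \in \mathcal{D}^{\perp}$ and using that $T$ maps $TM$ into $\mathcal{D}^{\theta}$ by Lemma 3.3, the term $g(T\nabla_Z TW,X)$ vanishes.

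The two surviving terms are converted by the defining relation (\ref{e4}) of the shape operator together with the self-adjointness of $F$ and of $A_\xi$ with respect to $g$ (the former being immediate from (\ref{e00}) and (\ref{e0})): one finds $g(t\,h(Z,TW),X)=g(h(Z,TW),NX)=g(A_{NX}TW,Z)$ and $g(A_{NTW}Z,X)=g(A_{NTW}X,Z)$. Collecting everything gives $\cos^2\!\theta\, g(\nabla_Z W,X)=g(A_{NX}TW,Z)+g(A_{NTW}X,Z)$. Since $M$ is proper, $\cos^2\!\theta\neq 0$, and I conclude that $\mathcal{D}^{\theta}$ is a totally geodesic foliation if and only if $g(A_{NX}TW,Z)+g(A_{NTW}X,Z)=0$ for all admissible $X,Z,W$.

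Finally I would match this intermediate identity with the stated condition (\ref{e49}). Because $T$ restricts to a bijection of $\mathcal{D}^{\theta}$ (its square is $\cos^2\!\theta$ times the identity, which is invertible), as $W$ ranges over $\mathcal{D}^{\theta}$ so does $TW$; replacing $W$ by $TW$ in $g(A_{NX}W,Z)=-g(A_{NW}X,Z)$ therefore yields the intermediate identity and conversely, exactly the substitution used to close Theorem 5.2. I expect the only real obstacle to be the bookkeeping in the third paragraph: routing each term correctly through (\ref{e4}) and the two self-adjointness facts, and keeping track that it is the choice $V=TW$ (rather than $V=W$) that produces $\nabla_Z W$ carrying the nonzero factor $\cos^2\!\theta$. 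The rest is formal.
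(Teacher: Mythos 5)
Your proof is correct, and it reaches exactly the paper's key intermediate identity
\begin{equation}\nonumber
\cos^2\!\theta\, g(\nabla_Z W,X)=g(A_{NX}TW,Z)+g(A_{NTW}X,Z),
\end{equation}
but by a genuinely shorter route. The paper derives this identity from scratch: it starts from $g(\nabla_Z W,X)=g(\overline\nabla_Z FW,FX)$, splits $FW=TW+NW$ and $FX=NX$, applies Gauss and Weingarten, and then uses the decomposition $FNW=tNW+\omega NW$ together with (3.3)(a),(c) and $tNW=\sin^2\!\theta\,W$ to untangle the $\sin^2\!\theta$ term before collecting everything. You instead reuse Lemma 4.1 (equation (4.1)), which already packages the tangential part of $\overline\nabla_U FV=F\overline\nabla_U V$, and the real idea in your argument is the substitution $V=TW$: this makes $T^2W=\cos^2\!\theta\,W$ appear immediately on the left, so that pairing with $X\in\mathcal{D}^{\perp}$ kills the $T(\cdot)$ term (by Lemma 3.3) and the two shape-operator terms fall out via (2.7) and self-adjointness. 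Both proofs then close identically: properness gives $\cos^2\!\theta\neq 0$, and since $T$ is a bijection of $\mathcal{D}^{\theta}$ onto itself, the substitution $W\mapsto TW$ converts the intermediate condition into (5.3) and back. What your route buys is economy and transparency --- no need to manipulate $FNW$ or track the $\sin^2\!\theta\,g(\nabla_Z X,W)$ term --- at the cost of depending on Lemma 4.1; the paper's version is self-contained but repeats, in expanded form, the computation that Lemma 4.1 already encodes. All the individual steps you flagged as bookkeeping (the vanishing of $g(T\nabla_Z TW,X)$, the conversions $g(t\,h(Z,TW),X)=g(A_{NX}TW,Z)$ and $g(A_{NTW}Z,X)=g(A_{NTW}X,Z)$) check out.
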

\begin{proof} Using (\ref{e2}), (\ref{e0}), and (\ref{e1}), we have $g(\nabla_Z W,X)=g(\overline \nabla_Z {FW}, FX)$
for any $ Z, W \in \mathcal{D}^{\theta}$ and $X \in
\mathcal{D}^{\perp}$. Next, using (\ref{e6}) and (\ref{e11}),
obtain $g(\nabla_Z W,X)=-g(TW, \overline \nabla_Z NX)-g(NW,\overline
\nabla_Z FX)$.
Hence, using (\ref{e3}) and (\ref{e00}), we get $g(\nabla_Z W,X)= g(TW,A_{NX} Z)-g(FNW, \overline\nabla_Z X)$.
With the help of (2.12), (3.3)-(a), (3.3)-(c) and (\ref{e2}),
we arrive at
\begin{equation}\nonumber \label{}
\begin{array}{c}
g(\nabla_Z W,X)=-g(A_{NX} Z, TW)- \sin^2\!{\theta}\, g({\nabla}_Z
X,W)+g(h(X,Z), NTW).
\end{array}
\end{equation}
Upon direct calculation, we find
\begin{equation}\nonumber \label{}
\begin{array}{c}
 \cos^2\!{\theta} \,\,g({\nabla}_Z W,X)=g(A_{NX} TW,Z)+ g(A_{NTW} X,Z)
\end{array}
\end{equation}
So, we deduce that the slant distribution $\mathcal{D}^{\theta}$ defines a totally geodesic foliation if and only if
\begin{equation} \label{e50}
\begin{array}{c}
g(A_{NX} TW, Z)= -g(A_{NTW} X,Z),
\end{array}
\end{equation}
By putting $W=TW$, we see that the last equation is equivalent to the equation (\ref{e49}).
\end{proof}
Thus, from Theorems 5.2 and 5.3, we obtain the expected result.
\begin{cor}  Let $M$ be a proper hemi-slant submanifold of a l.p.R. manifold $\bar{M}$.
Then $M$ is a hemi-slant product manifold $M=M_{\perp} \times
M_{\theta} $ if and only if
\begin{equation} \label{e51}
\begin{array}{c}
A_{NX} Z=-A_{NZ} X,
\end{array}
\end{equation}
where  $X \in \mathcal{D}^{\perp}$ and $ Z \in \mathcal{D}^{\theta}$.
\end{cor}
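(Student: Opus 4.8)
The plan is to deduce the corollary from the two foliation criteria of Theorems 5.2 and 5.3. The key background fact I would invoke is the de~Rham-type splitting principle: since $TM=\mathcal{D}^{\perp}\oplus\mathcal{D}^{\theta}$ is an orthogonal direct sum, the submanifold $M$ is locally the Riemannian product $M_{\perp}\times M_{\theta}$ if and only if \emph{both} $\mathcal{D}^{\perp}$ and $\mathcal{D}^{\theta}$ define totally geodesic foliations on $M$. This is exactly applicable here: $\mathcal{D}^{\perp}$ is always integrable by Theorem 4.8, and when both distributions have totally geodesic leaves those leaves are, respectively, an anti-invariant submanifold and a proper slant submanifold of $\bar M$, so their local product is precisely a hemi-slant product in the sense of Definition 5.1. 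Hence it suffices to show that the single vector identity \eqref{e51} is equivalent to the conjunction of \eqref{e47} (Theorem 5.2) and \eqref{e49} (Theorem 5.3).

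First I would read \eqref{e51}, written as $A_{NX}Z+A_{NZ}X=0$, componentwise against the splitting $TM=\mathcal{D}^{\perp}\oplus\mathcal{D}^{\theta}$; that is, it holds if and only if its inner product with every $Y\in\mathcal{D}^{\perp}$ and with every $W\in\mathcal{D}^{\theta}$ vanishes. For the $\mathcal{D}^{\perp}$-component, the self-adjointness of the shape operator \eqref{e4} together with $A_{NX}Y=0$ from Corollary 4.9 makes the first term disappear, since $g(A_{NX}Z,Y)=g(A_{NX}Y,Z)=0$; what remains is $g(A_{NZ}X,Y)=0$. Applying \eqref{e4} once more, the very same reduction turns the criterion \eqref{e47} of Theorem 5.2 into $g(A_{NZ}X,Y)=0$ as well (its left-hand term vanishes for the identical reason). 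Thus the $\mathcal{D}^{\perp}$-projection of \eqref{e51} is exactly the condition that $\mathcal{D}^{\perp}$ be a totally geodesic foliation.

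For the $\mathcal{D}^{\theta}$-component I would use \eqref{e4} to rewrite the two terms as $g(A_{NX}Z,W)=g(h(Z,W),NX)$ and $g(A_{NZ}X,W)=g(h(X,W),NZ)$, so that the projection becomes $g(h(Z,W),NX)+g(h(X,W),NZ)=0$ for all $Z,W\in\mathcal{D}^{\theta}$. On the other hand, Theorem 5.3's criterion \eqref{e49} reads $g(h(W,Z),NX)+g(h(X,Z),NW)=0$; interchanging the labels $Z$ and $W$ --- legitimate because the condition is demanded for every pair in $\mathcal{D}^{\theta}$ --- identifies the two families of equations. Hence the $\mathcal{D}^{\theta}$-projection of \eqref{e51} is precisely the condition that $\mathcal{D}^{\theta}$ be a totally geodesic foliation, and combining the two projections with Theorems 5.2 and 5.3 completes the argument.

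I expect the relabeling bookkeeping in the $\mathcal{D}^{\theta}$-component to be the main subtlety: the terms $g(h(X,W),NZ)$ and $g(h(X,Z),NW)$ are not equal pairwise, and the equivalence emerges only after using that the identity is required for all $Z,W\in\mathcal{D}^{\theta}$. I would also take care to apply Corollary 4.9 and the symmetry of $h$ precisely, so that the $\mathcal{D}^{\perp}$-part genuinely collapses onto \eqref{e47} and not merely to a weaker consequence of it.
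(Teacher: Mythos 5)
Your proof is correct and takes essentially the same route as the paper: the paper's proof of this corollary is exactly the one-line observation that it follows by combining Theorems 5.2 and 5.3. Your componentwise verification (splitting the identity $A_{NX}Z=-A_{NZ}X$ against $\mathcal{D}^{\perp}$ and $\mathcal{D}^{\theta}$, using $A_{N\mathcal{D}^{\perp}}\mathcal{D}^{\perp}=0$ from Corollary 4.9, self-adjointness of $A$, and symmetry of $h$) simply makes explicit the bookkeeping the paper leaves implicit.
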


\section{Hemi-slant submanifolds with parallel canonical structures}
In this section, we get several results for the hemi-slant submanifolds
with parallel canonical structures usingthe previous results.\\

Let $M$ be any submanifold of a l.p.R. manifold $\bar{M}$ with the
endomorphisim $T$ and the normal bundle valued 1-form $N$  defined
by (\ref{e6}). We put
\begin{equation} \label{e52}
\begin{array}{c}
(\overline \nabla_UT)V= \nabla_UTV-T \nabla_UV
\end{array}
\end{equation}
and
\begin{equation} \label{e53}
\begin{array}{c}
(\overline \nabla_UN) V= \nabla^{\perp} _UNV -N\nabla_UV
\end{array}
\end{equation}
for any $U$,$V\in TM$. Then the endomorphisim $T$ (resp.1-form N) is
parallel if $\overline \nabla T \equiv 0$ \,\, (resp. $\overline
\nabla N \equiv 0)$\,. From (\ref{e17}) and (\ref{eon7}) we have
\begin{equation}\label{e54}
\begin{array}{c}
({\overline \nabla}_{U} T) V= A_{NV} U+ t\,h(U,V)\,
\end{array}
\end{equation}
and
\begin{equation}\label{e55}
\begin{array}{c}
({\overline \nabla}_{U} N) V= \omega\,h(U,V) -h(U,TV),
\end{array}
\end{equation}
respectively.
\begin{theorem}
Let $M$ be any submanifold of a l.p.R. manifold $\bar{M}$. Then $T$
is parallel, i.e., $\overline \nabla T \equiv 0$  if and only if
\begin{equation} \label{e56}
\begin{array}{c}
A_{NV} U=- A_{NU} V,
\end{array}
\end{equation}
for all $U$,$V\in TM$.
\end{theorem}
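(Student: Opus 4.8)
The plan is to deduce the entire statement from the single identity \eqref{e54}, which reads $(\overline\nabla_U T)V = A_{NV}U + t\,h(U,V)$. Since $T$ is parallel exactly when $(\overline\nabla_U T)V=0$ for all $U,V\in TM$, the theorem is equivalent to the purely algebraic assertion that, for all $U,V$,
\[
A_{NV}U + t\,h(U,V)=0 \iff A_{NV}U=-A_{NU}V.
\]
My first step is to rewrite both sides of this equivalence as conditions on the single scalar trilinear form $B(U,V,W):=g(A_{NV}U,W)$, obtained by pairing with an arbitrary $W\in TM$. Using \eqref{e4} together with the symmetry of $h$ one gets $B(U,V,W)=g(h(U,W),NV)$, so $B$ is \emph{symmetric under interchange of its first and third arguments}. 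Using \eqref{e0}, \eqref{e6} and \eqref{e7} a short computation yields $g(t\,h(U,V),W)=g(h(U,V),NW)=B(U,W,V)$; this conversion of $t\,h$ into the form $B$ is the one identity that must be checked carefully, and it is the technical heart of the argument.

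With these preparations the two displayed conditions become statements about the symmetries of $B$. Parallelism, $A_{NV}U+t\,h(U,V)=0$, translates into $B(U,V,W)+B(U,W,V)=0$, i.e. $B$ is antisymmetric in its last two slots; the target condition $A_{NV}U=-A_{NU}V$ translates into $B(U,V,W)+B(V,U,W)=0$, i.e. $B$ is antisymmetric in its first two slots. For the forward implication I would start from the last-two antisymmetry, combine it with the first--third symmetry of $B$, and chase the indices through the three transpositions; for the converse I would run the same index-chase starting from the first-two antisymmetry. In either direction the outcome is that $B$ must vanish identically, whence $A_{NV}U=0$ for all $U,V$, and then both algebraic conditions hold: directly for the forward direction, and via $t\,h\equiv 0$ together with \eqref{e54} for the converse.

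The step I expect to be the main obstacle is precisely this collapse. Because the shape operator is self-adjoint and $h$ is symmetric, $B$ is \emph{forced} to be symmetric in its outer arguments, and a trilinear form that is simultaneously symmetric under one transposition and antisymmetric under another must vanish, since conjugate transpositions in $S_3$ cannot be assigned opposite signs. Recognising that each one-sided antisymmetry is therefore \emph{not} a weak condition but already equivalent to the strong vanishing $A_{N(TM)}(TM)=0$ is exactly what reconciles the two directions; a naive route that merely interchanges $U$ and $V$ in \eqref{e54} and subtracts produces only $A_{NV}U=A_{NU}V$, and does not by itself reach the stated form. Once the symmetry collapse is seen, the remainder is the elementary bookkeeping of passing between the operator identity \eqref{e54} and the trilinear form $B$.
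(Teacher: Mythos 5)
Your proof is correct, but it is structured differently from the paper's. The paper's argument is a single direct computation: pairing \eqref{e54} with an arbitrary tangent vector $U$ and using exactly your two ingredients (the conversion $g(t\,h(W,V),U)=g(h(W,V),NU)$ and the self-adjointness of the shape operator), it establishes the identity
\[
g\bigl((\overline\nabla_W T)V,\,U\bigr)=g\bigl(A_{NV}U+A_{NU}V,\,W\bigr),
\]
from which both directions of the equivalence are read off simultaneously, with no case analysis and no collapse argument. Your route instead shows that each of the two conditions, combined with the symmetry $B(U,V,W)=B(W,V,U)$ forced by self-adjointness, separately implies the identical vanishing $B\equiv 0$; the $S_3$ argument (conjugate transpositions cannot be assigned opposite signs by a trilinear form) is sound, and the bookkeeping identities you flag as the technical heart, namely $B(U,V,W)=g(h(U,W),NV)$ and $g(t\,h(U,V),W)=B(U,W,V)$, are both correct. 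What your longer route buys is a genuinely stronger conclusion that the paper's proof leaves implicit: either condition of the theorem forces $A_{NV}U=0$ for all $U,V\in TM$ (equivalently $h(TM,TM)\perp N(TM)$) together with $t\,h\equiv 0$, so the ``antisymmetry'' \eqref{e56} is never realized nontrivially; this also makes transparent why the naive subtraction trick, which only yields $A_{NV}U=A_{NU}V$, is not a dead end but is in fact consistent with the stated sign. What the paper's route buys is brevity: one three-line identity replaces your two index chases and the separate observation that both implications factor through $B\equiv 0$.
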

\begin{proof} For any $U$,$V, W \in TM$ from (\ref{e54}), we have
$$g\big((\overline \nabla_W T)V,U \big)= g(A_{NV} W,U )+ g( t\,h (W,V),U)\,.$$
\noindent Hence, using (\ref{e7}), (\ref{e0}) and (\ref{e6}), we
obtain
$$g((\overline \nabla_W T)V,U)= g(A_{NV} W,U )+ g( h (W,V),NU)\,.$$
\noindent Since $A$ is self-adjoint, with the help of (\ref{e4}), we
get
\begin{equation} \label{e57}
\begin{array}{c}
g((\overline \nabla_W T)V,U)= g(A_{NV} U,W )+ g( A_{NU} V,W).
\end{array}
\end{equation}
Thus, our assertion comes from (\ref{e57}).
\end{proof}
\begin{theorem}
Let $M$ be a proper hemi-slant submanifold of a l.p.R. manifold
$\bar{M}$. If $T$ is parallel, then $M$ is a hemi-slant product. The
converse is true, if $h(\mathcal{D}^{\theta},\mathcal{D}^{\theta})
{\perp } N\mathcal{D}^{\theta} $.
\end{theorem}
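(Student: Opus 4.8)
The plan is to route both implications through Theorem 6.1, which recasts $\overline\nabla T\equiv 0$ as the pointwise identity $A_{NV}U=-A_{NU}V$ for all $U,V\in TM$, and through Corollary 5.4, which is exactly the restriction of this identity to $U\in\mathcal{D}^\perp$, $V\in\mathcal{D}^\theta$. For the forward implication I would simply specialize: if $T$ is parallel, then by Theorem 6.1 the identity holds for every $U,V$, so in particular $A_{NX}Z=-A_{NZ}X$ for $X\in\mathcal{D}^\perp$ and $Z\in\mathcal{D}^\theta$, whence Corollary 5.4 immediately gives that $M$ is a hemi-slant product. This half is frictionless and uses no extra hypothesis.

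For the converse I would again invoke Theorem 6.1 and try to \emph{promote} the mixed identity supplied by Corollary 5.4 to all of $TM$. Since $A_{NV}U+A_{NU}V$ is bilinear and $TM=\mathcal{D}^\perp\oplus\mathcal{D}^\theta$, it suffices to test three generator types. For $U,V\in\mathcal{D}^\perp$ both shape operators vanish by Corollary 4.9 (the identity $A_{N\mathcal{D}^\perp}\mathcal{D}^\perp=0$); for $U\in\mathcal{D}^\perp$, $V\in\mathcal{D}^\theta$ the identity is precisely the hemi-slant product hypothesis through Corollary 5.4. The whole weight of the theorem therefore falls on the case $U=Z$, $V=W\in\mathcal{D}^\theta$, where I must establish $A_{NW}Z+A_{NZ}W=0$, and this is exactly the point at which the assumption $h(\mathcal{D}^\theta,\mathcal{D}^\theta)\perp N\mathcal{D}^\theta$ is meant to act.

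To treat that case I would pair $A_{NW}Z$ against tangent vectors via $g(A_{NW}Z,U)=g(h(Z,U),NW)$ from $(2.6)$. For $U\in\mathcal{D}^\theta$ the hypothesis forces this to vanish, so $A_{NW}Z$ and $A_{NZ}W$ both lie in $\mathcal{D}^\perp$, and only their $\mathcal{D}^\perp$-components survive. Feeding the identity $(\overline\nabla_U T)V=A_{NV}U+t\,h(U,V)$ of $(6.3)$ together with $T(\mathcal{D}^\perp)=\{0\}$ from Lemma 3.3, this surviving component is governed by $t\,h(Z,W)$, equivalently by the pairing $g(h(Z,W),FX)$ of $h(\mathcal{D}^\theta,\mathcal{D}^\theta)$ against $F\mathcal{D}^\perp$. \emph{Here lies the main obstacle}: the stated hypothesis controls $h(\mathcal{D}^\theta,\mathcal{D}^\theta)$ only against $N\mathcal{D}^\theta$, while the term that must be killed pairs it against $F\mathcal{D}^\perp$, and transporting $F$ through $\overline\nabla$ via $\overline\nabla F\equiv 0$ only reproduces the same pairing, so the relation is self-referential rather than self-annihilating. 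My plan would be to try to extract the missing orthogonality $h(\mathcal{D}^\theta,\mathcal{D}^\theta)\perp F\mathcal{D}^\perp$ from the product structure itself (both $\mathcal{D}^\perp$ and $\mathcal{D}^\theta$ are parallel, so $\nabla_Z TW\in\mathcal{D}^\theta$ and $\nabla_Z X\in\mathcal{D}^\perp$), using the Weingarten formula $(2.5)$ and bookkeeping, through the normal decomposition $(3.2)$ and the projection relations $(3.3)$, of which subbundle among $F\mathcal{D}^\perp$, $N\mathcal{D}^\theta$, $\mu$ each value of $h$ occupies; if this orthogonality can be secured then $h(\mathcal{D}^\theta,\mathcal{D}^\theta)\subseteq\mu$, whence $t\,h(Z,W)=0$, the $\mathcal{D}^\perp$-component cancels, and the three cases together yield $A_{NV}U=-A_{NU}V$ on all of $TM$, so Theorem 6.1 returns $\overline\nabla T\equiv 0$. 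I expect this extraction step to be the delicate one on which the entire converse hinges.
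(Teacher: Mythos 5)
Your forward implication coincides with the paper's: Theorem 6.1 converts $\overline{\nabla}T\equiv 0$ into $A_{NV}U=-A_{NU}V$ for all $U,V\in TM$, and restricting to $U\in\mathcal{D}^{\perp}$, $V\in\mathcal{D}^{\theta}$ one invokes Corollary 5.4. Your converse also follows the paper's skeleton: the paper reduces to the same three types of pairs (it cites (4.13) where you cite Corollary 4.9, and it uses (6.7), i.e.\ Corollary 5.4, for the mixed pairs), and it uses the hypothesis $h(\mathcal{D}^{\theta},\mathcal{D}^{\theta})\perp N\mathcal{D}^{\theta}$ exactly as you do, namely only to conclude $A_{NZ}W\in\mathcal{D}^{\perp}$ for $Z,W\in\mathcal{D}^{\theta}$. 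Where you stop, however, the paper continues with a purely algebraic chain, alternating self-adjointness of $A$ with Corollary 5.4:
\begin{equation*}
g(A_{NZ}W,X)=g(A_{NZ}X,W)=-g(A_{NX}Z,W)=-g(A_{NX}W,Z)=-g(A_{NW}X,Z)=-g(A_{NW}Z,X),
\end{equation*}
from which it gets $A_{NZ}W=-A_{NW}Z$ and hence, combining the three cases, parallelism of $T$ via Theorem 6.1. Judged against the printed proof, then, your proposal is missing its last step: you reduce the $\mathcal{D}^{\theta}$--$\mathcal{D}^{\theta}$ case to the unproved orthogonality $h(\mathcal{D}^{\theta},\mathcal{D}^{\theta})\perp F\mathcal{D}^{\perp}$ and leave it there, which is a genuine gap.

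That said, your diagnosis of the obstacle is more accurate than the paper's resolution of it. The fourth equality in the chain above asserts $g(A_{NX}W,Z)=g(A_{NW}X,Z)$, but Corollary 5.4 --- the very meaning of \emph{hemi-slant product} being used in the second equality --- gives $A_{NX}W=-A_{NW}X$, the opposite sign. Running the chain with the correct sign yields $g(A_{NZ}W,X)=+\,g(A_{NW}Z,X)$, i.e.\ the \emph{symmetric} relation $A_{NZ}W=A_{NW}Z$ (both sides lying in $\mathcal{D}^{\perp}$), which is precisely what your geometric route produces: from $(\overline{\nabla}_Z T)W=A_{NW}Z+t\,h(Z,W)$ and the parallelism of both distributions in a product, the $\mathcal{D}^{\perp}$-components of $A_{NW}Z$ and $A_{NZ}W$ both equal $-(t\,h(Z,W))_{\perp}$. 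Theorem 6.1 demands antisymmetry, and symmetry plus antisymmetry would force $A_{NZ}W=0$, i.e.\ exactly the vanishing of $g(h(Z,W),FX)$ --- the condition $h(\mathcal{D}^{\theta},\mathcal{D}^{\theta})\subseteq\mu$ that you could not extract; the product structure alone only returns the self-consistent identity $g(A_{NW}Z,X)=-g(A_{NX}Z,W)$ and does not supply it. So the gap you hit is real: neither your argument nor the paper's, as written, derives the converse from the stated hypothesis; the paper crosses that gap by a sign error, while you stop honestly in front of it.
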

\begin{proof}
Let $X$ be in $\mathcal{D}^{\perp} $ and $Z$ in
$\mathcal{D}^{\theta} $. If $T$ is parallel, then from  (\ref{e56}),
we have
\begin{eqnarray} \label{yeni}
A_{NX}Z= -A_{NZ} X.
\end{eqnarray}
Thus, by Corollary 5.4, we conclude that $M$ is a hemi-slant
product. Conversely, if $M$ is a hemi-slant product and
$h(\mathcal{D}^{\theta},\mathcal{D}^{\theta}) {\perp }
N\mathcal{D}^{\theta} $, then for any $Z, W$ and $V \in
\mathcal{D}^{\theta}$, we have $g(A_{NZ} W, V)=g(h(V,W), NZ)=0$. It
means that $A_{ NZ} W \in \mathcal{D}^{\perp}$. Now, let calculate
$g(A_{NZ} W, X)$ for $X\in \mathcal{D}^{\perp} $. Since $ M$ is a
hemi-slant product and $A$ is self-adjoint $g(A_{NZ} W, X)=g(A_{NZ} X, W)=-g(A_{NX} Z, W)=-g(A_{NX} W, Z)
             =-g(A_{NW} X, Z)=-g(A_{NW} Z, X).$

Hence, we deduce
\begin{eqnarray} \label{e58}
A_{NZ} W =-A_{NW} Z\,,
\end{eqnarray}
for all  $Z$, $W \in \mathcal{D}^{\theta}$.

Thus, from (4.13), (\ref{yeni}) and (\ref{e58}), we obtain
(\ref{e56}) and by Theorem 6.1, $T$ is parallel.
\end{proof}
\begin{theorem} Let $M$ be a proper hemi-slant submanifold of $\overline{M}$. If $N$ is parallel, then
\begin{eqnarray}
(a)\quad A_{\mu} {\mathcal{D}^{\perp}}=0\,, \quad(b)\quad
A_{N {\mathcal{D}^{\theta}}} \mathcal{D}^{\perp} =0 , \quad\quad (c)\quad A_{N{\mathcal{D}^{\perp}}} \mathcal{D}^{\theta} =0\,,\qquad\qquad\nonumber\quad\quad\\
(d)\quad \text{M is a hemi-slant product},\quad (e)\quad \text{M
is}\,\, (\mathcal{D}^{\perp}, \mathcal{D}^{\theta})\text{-mixed
totally geodesic}\nonumber.
\end{eqnarray}
\end{theorem}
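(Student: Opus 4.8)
The plan is to convert the hypothesis ``$N$ is parallel'' into the single pointwise identity
$\omega\,h(U,V)=h(U,TV)$ for all $U,V\in TM$, which is precisely $(\overline\nabla_U N)V=0$ read off from \eqref{e55}. Every one of the assertions (a)--(e) is a statement about where the second fundamental form $h$ (equivalently the shape operator, via \eqref{e4}) sits inside the splitting $T^{\bot}M=F(\mathcal{D}^{\perp})\oplus N(\mathcal{D}^{\theta})\oplus\mu$ of \eqref{e12}. So the whole argument reduces to feeding carefully chosen tangent vectors into this identity and reading off orthogonality relations. As a preliminary fact I would first record that the kernel of $\omega$ on $T^{\bot}M$ is exactly $F(\mathcal{D}^{\perp})$: indeed $\omega N=-NT$ by (\ref{e131}-$c$), so $\omega$ is injective on $N(\mathcal{D}^{\theta})$ because $T$ is invertible on $\mathcal{D}^{\theta}$ for a proper submanifold, while $\omega$ is injective on $\mu$ by the $F$-invariance of $\mu$.

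For (a), (b) and (e) I would use the identity with one leg in $\mathcal{D}^{\perp}$. Putting $V=X\in\mathcal{D}^{\perp}$ and using $TX=0$ (Lemma 3.3) gives $\omega\,h(U,X)=0$, so by the preliminary fact $h(U,X)\in F(\mathcal{D}^{\perp})$ for every $U\in TM$. Since $F(\mathcal{D}^{\perp})$ is orthogonal to both $\mu$ and $N(\mathcal{D}^{\theta})$, this yields $g(A_\xi X,U)=g(h(U,X),\xi)=0$ for $\xi\in\mu$ and $g(A_{NZ}X,U)=0$ for $Z\in\mathcal{D}^{\theta}$, which are (a) and (b). For (e) I would instead set $U=X\in\mathcal{D}^{\perp}$ and $V=Z\in\mathcal{D}^{\theta}$; then $h(X,TZ)=\omega\,h(X,Z)=0$ by the step just made, and since $T(\mathcal{D}^{\theta})=\mathcal{D}^{\theta}$ (Lemma 3.3) the vector $TZ$ sweeps out all of $\mathcal{D}^{\theta}$, giving $h(\mathcal{D}^{\perp},\mathcal{D}^{\theta})=0$.

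Part (c) is the genuine obstacle, because here both legs lie in $\mathcal{D}^{\theta}$ and one application of the identity does not pin $h(Z,W)$ down. My plan is to iterate: for $Z,W\in\mathcal{D}^{\theta}$ a first application gives $h(Z,TW)=\omega\,h(Z,W)$, and applying it again to $TW$, together with $T^2W=\cos^2\!\theta\,W$, yields $\omega^2 h(Z,W)=\cos^2\!\theta\,h(Z,W)$. Rewriting $\omega^2=I-Nt$ by (\ref{e131}-$b$) turns this into $Nt\,h(Z,W)=\sin^2\!\theta\,h(Z,W)$. Now $Nt$ acts as the scalar $1$ on $F(\mathcal{D}^{\perp})$, as $\sin^2\!\theta$ on $N(\mathcal{D}^{\theta})$, and as $0$ on $\mu$ (these follow from $tNY=Y$, $tNW'=\sin^2\!\theta\,W'$ and $t|_\mu=0$). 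Writing $h(Z,W)=\eta_1+\eta_2+\eta_3$ for the three components of \eqref{e12}, the $F(\mathcal{D}^{\perp})$-part of the identity reads $(1-\sin^2\!\theta)\eta_1=0$ and the $\mu$-part reads $\sin^2\!\theta\,\eta_3=0$; properness ($\theta\neq0,\tfrac{\pi}{2}$) forces $\eta_1=\eta_3=0$, so $h(\mathcal{D}^{\theta},\mathcal{D}^{\theta})\subseteq N(\mathcal{D}^{\theta})$. In particular $g(h(Z,W),NX)=0$, hence $g(A_{NX}Z,W)=0$ for all $W\in\mathcal{D}^{\theta}$; combined with $A_{NX}Z\in\mathcal{D}^{\theta}$ from \eqref{e305}, this gives $A_{NX}Z=0$, which is (c). I expect the eigenvalue bookkeeping for $Nt$, and the essential use of properness to discard two of the three summands, to be the delicate point.

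Finally, (d) drops out by combining (b) and (c): since $A_{NZ}X=0$ and $A_{NX}Z=0$, the criterion $A_{NX}Z=-A_{NZ}X$ of Corollary 5.4 is satisfied trivially, whence $M$ is a hemi-slant product $M=M_{\perp}\times M_{\theta}$.
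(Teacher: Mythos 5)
Your proposal is correct, and its overall skeleton matches the paper's: both proofs convert the hypothesis into the pointwise identity $h(U,TV)=\omega\,h(U,V)$ read off from (6.4), both derive (a) and (b) from its consequence $\omega\,h(U,X)=0$ for $X\in\mathcal{D}^{\perp}$, and both obtain (d) by feeding (b) and (c) into Corollary 5.4. Where you genuinely diverge is in (c) and (e). For (c) the paper argues directly: $0=g(h(U,Z),X)=g(Fh(U,Z),FX)=g(\omega h(U,Z),NX)=g(h(U,TZ),NX)$, then replaces $Z$ by $TZ$ and uses $T^{2}Z=\cos^{2}\!\theta\,Z$ to get $\cos^{2}\!\theta\,g(A_{NX}Z,U)=0$ for \emph{all} $U\in TM$ at once; this is self-contained, two lines long, and uses only $\theta\neq\pi/2$. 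Your route --- iterating the identity to $Nt\,h(Z,W)=\sin^{2}\!\theta\,h(Z,W)$ and diagonalizing $Nt$ with eigenvalues $1$, $\sin^{2}\!\theta$, $0$ on the three summands of (3.2) --- costs more: it needs both $\theta\neq 0$ and $\theta\neq\pi/2$, and it must import Corollary 4.9 (hence Theorem 4.8) to dispose of the $\mathcal{D}^{\perp}$-component of $A_{NX}Z$, a splitting the paper's computation never has to make. In exchange you extract a stronger structural fact that the paper's proof does not give: when $N$ is parallel, $h(\mathcal{D}^{\theta},\mathcal{D}^{\theta})\subseteq N(\mathcal{D}^{\theta})$. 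Conversely, for (e) your argument ($h(X,TZ)=\omega\,h(Z,X)=0$ together with $T(\mathcal{D}^{\theta})=\mathcal{D}^{\theta}$ from Lemma 3.3) is shorter than the paper's, which applies $\omega$ twice and compares $\omega^{2}h(X,Z)=\cos^{2}\!\theta\,h(X,Z)$ against $\omega^{2}h(Z,X)=0$; amusingly, the paper spends the $\omega^{2}$ trick on (e) while you spend it on (c). Two details you should make explicit: your claim $\ker\omega=F(\mathcal{D}^{\perp})$ tacitly uses that $\omega$ preserves each of $N(\mathcal{D}^{\theta})$ and $\mu$ (so that the kernel cannot hide in a cross-term between these orthogonal summands), and the injectivity of $\omega$ on $N(\mathcal{D}^{\theta})$ needs $N$ itself to be injective on $\mathcal{D}^{\theta}$, which follows from (3.6) and $\theta\neq 0$; both points are immediate from (3.3) and properness, but they are doing real work in your (a) and (b).
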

\begin{proof} Let $N$ be parallel, it follows from (\ref{e55}) that
\begin{equation} \label{e59}
\begin{array}{c}
h(U, TV)= \omega h (U, V)
\end{array}
\end{equation}
for any $ U, V\in TM$. Then, for any $X\in  \mathcal{D}^{\perp}$, we
have
\begin{equation} \label{e60}
\begin{array}{c}
\omega h(U, X)=0
\end{array}
\end{equation}
from (\ref{e59}). For any $\xi \in \mu $, using  (\ref{e6}),
(\ref{e0}) and (\ref{e4}), we have
$$g(\omega h(U, X),\xi) = g ( h(U,X), F \xi)= g(A_{F {\xi}} X, U)\,. $$
Thus, using (\ref{e60}) we get
\begin{eqnarray} \label{e61}
g(A_{F {\xi}} X, U)=0\,.
\end{eqnarray}
Since $\mu$ is invariant with respect to $F$, the assertion (a)
comes from  (\ref{e61}). Now, take $Z \in \mathcal{D}^{\theta}$,
after some calculations, we find
\begin{eqnarray} \nonumber
 g(A_{NZ} X, U)=g(\omega h(U,X),NZ)\,.
\end{eqnarray}

So, using (\ref{e60}), we get $g(A_{NZ} X,U)=0$, which is equivalent
to the assertion (b). On the other hand, for any  $X\in
\mathcal{D}^{\perp}$, using  (\ref{e0}), (\ref{e6}), (\ref{e7}) and
(\ref{e59}), we have
\begin{eqnarray}
&&0=g(h(U,Z),X)=g(Fh(U,Z),FX)=g(\omega h(U,Z),FX)\nonumber\\
&&\quad=g(h(U,TZ),FX)=g(h(U,TZ),NX),\nonumber
\end{eqnarray}

that is, $g(h(U,TZ),NX)=0$. Putting $Z=TZ$ in last
equation, we obtain
$$\cos^2\!\theta\, g(h(U,Z),NX)=\cos^2\!\theta\, g (A_{NX} Z,U)=0\,. $$

\noindent Since $\theta\neq {\pi\over 2}$, the assertion (c)
follows. The assertion (d) follows from the assertions (b), (c) and
(\ref{e51}). Lastly, using (\ref{e141}), from (\ref{e59}),  we have

$\omega^2 h(X,Z)= \omega h(X,TZ)=h(X,T^2Z)=\cos^2\!\theta h(X,Z).$
On the other hand, using (\ref{e8})-(a), we have
$\omega^2h(X,Z)=\omega^2 h(Z,X)=\omega h(Z,TX)=0$. Thus, we get
$\cos^2\!\theta\,h(X,Z)=0$. Since $\theta\neq {\pi\over 2}$, we
deduce that $h(X,Z)=0$, which proves that the last assertion.
\end{proof}
\section{Totally umbilical hemi-slant submanifolds}

In this section we shall give two characterization theorems for the totally umbilical
proper hemi-slant submanifolds of a l.p.R. manifold.
First we prove
\begin{theorem}
If $M$ is a totally umbilical proper hemi-slant submanifold of a
l.p.R. manifold $\bar{M}$, then either the anti-invariant
distribution $\mathcal{D}^{\perp}$ is 1-dimensional or the mean
curvature vector field $H$ of $M$ is perpendicular to $F(
\mathcal{D}^{\perp})$. Moreover, if $M$ is a
 hemi-slant product, then $H\in \mu$.
\end{theorem}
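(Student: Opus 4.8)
The plan is to locate $H$ inside the orthogonal splitting $T^{\perp}M=F(\mathcal{D}^{\perp})\oplus N(\mathcal{D}^{\theta})\oplus\mu$ of (3.2) by testing it against each summand. Since every $X\in\mathcal{D}^{\perp}$ is anti-invariant, Lemma 3.3$(a)$ gives $TX=0$ and hence $FX=NX$, so that $F(\mathcal{D}^{\perp})=N(\mathcal{D}^{\perp})$ and the first assertion becomes the claim that $g(H,NX)=0$ for every $X\in\mathcal{D}^{\perp}$. Feeding the umbilicity hypothesis (2.7) into the shape-operator relation (2.6) turns this into a statement about $A$: for all $X,Y\in\mathcal{D}^{\perp}$ one has $g(H,NX)\,g(Y,Y)=g(h(Y,Y),NX)=g(A_{NX}Y,Y)$, so the first part reduces to showing $g(A_{NX}Y,Y)=0$ for a well-chosen $Y$.

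To evaluate $A_{NX}Y$ I would differentiate $FX$ along $Y$. Parallelism of $F$ (2.3) gives $\overline{\nabla}_{Y}FX=F\overline{\nabla}_{Y}X$ for $X,Y\in\mathcal{D}^{\perp}$; expanding the left-hand side by the Weingarten formula (2.5) and the right-hand side by the Gauss formula (2.4) and the splitting (2.11), and comparing tangential parts, yields the special case of (4.1) with $V=X$, namely $-A_{NX}Y=T\nabla_{Y}X+t\,h(Y,X)$. If I now take $X,Y$ orthonormal, umbilicity forces $h(Y,X)=g(Y,X)H=0$, so $A_{NX}Y=-T\nabla_{Y}X$. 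By Lemma 3.3 the image of $T$ lies in $\mathcal{D}^{\theta}$, whence $T\nabla_{Y}X\in\mathcal{D}^{\theta}$ is orthogonal to $Y\in\mathcal{D}^{\perp}$ and $g(A_{NX}Y,Y)=0$; combined with the reduction above this gives $g(H,NX)=0$. The only place the dimension enters is the choice of a unit $Y\in\mathcal{D}^{\perp}$ orthogonal to the given $X$: such a $Y$ exists exactly when $\dim\mathcal{D}^{\perp}\geq 2$. This is precisely the dichotomy claimed — when $\dim\mathcal{D}^{\perp}=1$ no orthogonal partner $Y$ is available, while for $\dim\mathcal{D}^{\perp}\geq 2$ the argument gives $H\perp F(\mathcal{D}^{\perp})$.

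For the \emph{moreover} clause I must, in addition, annihilate the $N(\mathcal{D}^{\theta})$-component of $H$, and here I would bring in the hemi-slant product hypothesis through its characterization $A_{NX}Z=-A_{NZ}X$ (Corollary 5.4), valid for $X\in\mathcal{D}^{\perp}$ and $Z\in\mathcal{D}^{\theta}$. For such $X,Z$ the self-adjointness of $A$ together with (2.6)--(2.7) give the chain $g(H,NZ)\,g(X,X)=g(h(X,X),NZ)=g(A_{NZ}X,X)=-g(A_{NX}Z,X)=-g(h(X,Z),NX)=-g(X,Z)\,g(H,NX)=0$, the last equality because $X\perp Z$; since $M$ is proper, $\mathcal{D}^{\perp}\neq\{0\}$ and therefore $g(H,NZ)=0$, i.e. $H\perp N(\mathcal{D}^{\theta})$. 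The symmetric chain $g(H,NX)\,g(Z,Z)=g(A_{NX}Z,Z)=-g(A_{NZ}X,Z)=-g(h(X,Z),NZ)=0$, now using $\mathcal{D}^{\theta}\neq\{0\}$, recovers $H\perp F(\mathcal{D}^{\perp})$ directly from the product structure. As $\mu$ is by definition the orthogonal complement of $F(\mathcal{D}^{\perp})\oplus N(\mathcal{D}^{\theta})$ in $T^{\perp}M$, these two orthogonalities force $H\in\mu$.

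The component bookkeeping in the Gauss--Weingarten expansion is routine; the one delicate step, and the reason for the dimension alternative, is obtaining $g(A_{NX}Y,Y)=0$, which hinges on making $h(X,Y)$ vanish and therefore on having a direction $Y\in\mathcal{D}^{\perp}$ orthogonal to $X$. Beyond keeping the tangential and normal projections straight and invoking Lemma 3.3 to place $T\nabla_{Y}X$ in $\mathcal{D}^{\theta}$, I anticipate no serious obstacle.
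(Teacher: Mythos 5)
Your proposal is correct, and its second half (the ``moreover'' clause) is essentially the paper's own chain of equalities built on Corollary 5.4; the first half, however, takes a genuinely different route. The paper obtains $g(H,FY)=g(h(X,X),NY)=g(A_{NY}X,X)=0$ by quoting Corollary 4.9, i.e.\ $A_{N\mathcal{D}^{\perp}}\mathcal{D}^{\perp}=0$, which rests on the unconditional integrability of $\mathcal{D}^{\perp}$ (Theorem 4.8, proved via the closed form $\Omega$). You instead re-derive the needed vanishing from scratch: Lemma 4.1 with $V=X\in\mathcal{D}^{\perp}$ gives $-A_{NX}Y=T\nabla_{Y}X+t\,h(Y,X)$, umbilicity kills $h(Y,X)$ for orthonormal $X,Y$, and $T\nabla_{Y}X\in\mathcal{D}^{\theta}\perp Y$ finishes the computation. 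Your argument is more elementary --- it bypasses Theorem 4.8 entirely --- and it is the only one of the two in which the dimension dichotomy does real work: you genuinely need a unit $Y\perp X$ inside $\mathcal{D}^{\perp}$. Conversely, the paper's stronger input shows the dichotomy is actually superfluous, since $A_{NX}X=0$ as well, so $H\perp F(\mathcal{D}^{\perp})$ holds whenever $\mathcal{D}^{\perp}\neq\{0\}$, even when it is $1$-dimensional. A further small payoff of your version: your ``symmetric chain'' re-establishes $H\perp F(\mathcal{D}^{\perp})$ from the product hypothesis alone, so your proof of $H\in\mu$ covers the case $\dim\mathcal{D}^{\perp}=1$ explicitly, whereas the paper's final step cites its equation (7.2), which as written was justified only for $\dim\mathcal{D}^{\perp}>1$ (Corollary 4.9 silently fills that gap).
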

\begin{proof} Since $M$ is a totally umbilical proper hemi-slant submanifold either
$\it {D}im ( \mathcal{D}^{\perp})=1$ or $\it {D}im (
\mathcal{D}^{\perp}) > 1$. If $\it {D}im ( \mathcal{D}^{\perp})=1$,
it is obvious. If $\it {D}im ( \mathcal{D}^{\perp}) > 1$, then we
can choose $X, Y \in \mathcal{D}^{\perp} $ such that $\{ X, Y\}$ is
orthonormal. By using (\ref{e6}), (\ref{e5}), (\ref{e4}) and (4.22),
we have
\begin{eqnarray}
&&g(H, FY)=g(h( X,X),NY)=g(A_{NY} X, X)=0
\end{eqnarray}
It means that
\begin{eqnarray} \label{e62}
H {\perp} F(\mathcal{D}^{\perp}).
\end{eqnarray}
Moreover, if $M$ is a hemi-slant product, for any $Z\in
\mathcal{D}^{\theta}$, using (\ref{e51}) and (2.7), we have
\begin{eqnarray}
&&g(H, NZ)=g(h( X,X),NZ)=g(A_{NZ} X, X)=-g(A_{NX} Z, X)\nonumber\\[.2cm]
             &&\qquad\qquad=-g(h( Z,X),NX)=0.\nonumber
\end{eqnarray}

Hence, it follows that
\begin{eqnarray} \label{e63}
H {\perp} N(\mathcal{D}^{\theta}).
\end{eqnarray}

Thus, using (\ref{e62}) and (\ref{e63}) from (\ref{e12}),  we get
$H\in \mu$.
\end{proof}

Before giving the second result of this section, recall that the following fact about locally product Riemannian manifolds.\\

Let $M_1(c_1)$ (resp. $M_2 (c_2)$) be a real space form with
sectional curvature $c_1$ (resp. $c_2)$. Then the Riemannian
curvature tensor $\overline R$ of the locally product Riemannian
manifold $\bar{M}= M_1(c_1)\times M_2(c_2)$ has the form
\begin{eqnarray} \label{e64}
\qquad\overline R(\bar U,\!\!\bar V)\bar
W\!\!\!\!\!&\!\!\!={1\over 4}
\!(c_1\!\!+\!\!c_2)\!\bigg\{\!g(\bar V\!,\!\bar W)\bar U\!\!-\!\!g(\bar U\!,\!\bar W)\bar V\!\!+\!\!g(F \bar V\!,\!\bar W)F\bar U\!\!-\!\!g(F\bar U,\!\bar W)F\bar V\!\bigg\}\\
             &~+{1\over 4}\!(c_1\!\!-\!\!c_2)\bigg\{\!g(F\bar V,\!\bar W)\bar U\!\!-\!\!g(F\bar U\!,\!\bar W)\bar V\!+\!\!g(\bar V\!,\!\bar W)F\bar U\!\!-\!\!g(\bar U\!,\!\bar W)F\bar V \bigg \},\nonumber
\end{eqnarray}
where $\bar U, \bar V,\bar W \in T\bar M$ \,\, \cite{Yan}.

\begin{theorem} Let $M$ be a totally umbilical hemi-slant submanifold with parallel mean curvature vector field
$H$ of a l.p.R. manifold $\bar{M } =M_1(c_1)\times M_2(c_2)$ with $c_1\neq c_2$. Then, $M$ can not be proper.
\end{theorem}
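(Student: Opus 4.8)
The plan is to argue by contradiction. Suppose $M$ were proper, so that $\mathcal{D}^{\perp}\neq\{0\}$ and $\mathcal{D}^{\theta}$ is a genuine slant distribution with $\theta\neq 0,\tfrac{\pi}{2}$; in particular there exist a unit $X\in\mathcal{D}^{\perp}$ and a nonzero $Z\in\mathcal{D}^{\theta}$. The engine of the proof will be the Codazzi equation together with the explicit form (\ref{e64}) of $\overline R$ for $\bar M=M_1(c_1)\times M_2(c_2)$. The first step is to show that the hypotheses force the normal part of $\overline R$ to vanish on $TM$; the second step is to read off a contradiction from (\ref{e64}).

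For the first step I would insert total umbilicity $h(U,V)=g(U,V)H$ from (\ref{e5}) into the definition (\ref{a1}) of $(\overline\nabla_U h)(V,W)$. Expanding $\nabla^{\perp}_U\big(g(V,W)H\big)=(Ug(V,W))H+g(V,W)\nabla^{\perp}_UH$ and using metric compatibility of the induced connection, $Ug(V,W)=g(\nabla_UV,W)+g(V,\nabla_UW)$, all the terms carrying $H$ cancel and one is left with $(\overline\nabla_U h)(V,W)=g(V,W)\nabla^{\perp}_UH$. Since $H$ is parallel this is zero, so the Codazzi equation gives $\big(\overline R(U,V)W\big)^{\perp}=0$ for all $U,V,W\in TM$.

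The second step is to compute this normal component from (\ref{e64}). Writing $a=\tfrac14(c_1+c_2)$ and $b=\tfrac14(c_1-c_2)$, and noting that among the tangent vectors $\bar U,\bar V,\bar W$ only the terms $F\bar U,F\bar V$ carry normal parts $N\bar U,N\bar V$, one obtains $\big(\overline R(\bar U,\bar V)\bar W\big)^{\perp}=a\big[g(F\bar V,\bar W)N\bar U-g(F\bar U,\bar W)N\bar V\big]+b\big[g(\bar V,\bar W)N\bar U-g(\bar U,\bar W)N\bar V\big]$. Now I would choose the triple $\bar U=\bar W=X$ and $\bar V=Z$. Here $g(FZ,X)=g(Z,FX)=0$ and $g(FX,X)=g(NX,X)=0$ because $NX=FX$ is normal, while $g(Z,X)=0$ since the two distributions are orthogonal and $g(X,X)=1$. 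All brackets but one drop out and the identity collapses to $-b\,NZ=0$. Because $c_1\neq c_2$ we have $b\neq0$, hence $NZ=0$; but $\|NZ\|^2=\sin^2\!\theta\,\|Z\|^2>0$ for a proper slant distribution, a contradiction. Therefore $M$ cannot be proper.

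The computations are all routine; the only real decision is the choice of the triple $(\bar U,\bar V,\bar W)$. The natural-looking choice $\bar W=Z$ would introduce the term $g(FZ,Z)=g(TZ,Z)$, which need not vanish since $F$ (and hence $T$) is self-adjoint rather than skew, and which mixes the $a$- and $b$-coefficients. Taking $\bar W=X$ instead is what isolates the pure $(c_1-c_2)$ contribution and makes the contradiction immediate, so recognizing this choice is the main, if modest, obstacle.
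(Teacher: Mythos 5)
Your proof is correct and follows essentially the same route as the paper: both use total umbilicity, the parallelism of $H$, and the Codazzi equation to force $\big(\overline R(X,Z)X\big)^{\perp}=0$, and both then evaluate the curvature formula (\ref{e64}) on the very same triple $(X,Z,X)$ to isolate the term $-\tfrac14(c_1-c_2)NZ$, whose nonvanishing for a proper slant distribution gives the contradiction. Your first step is stated a bit more generally (vanishing of $(\overline R(U,V)W)^{\perp}$ for all tangent $U,V,W$), but this is the same computation the paper performs for the specific triple.
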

\begin{proof} Let $X \in \mathcal{D}^{\perp} $ and $ Z\in \mathcal{D}^{\theta} $ be two unit vector fields.
Since $H$ is parallel, using (\ref{a1}) and (\ref{e5}) from the
Codazzi equation (2.9), we have
\begin{eqnarray} \label{e66}
({\overline R }(X, Z) X)^{\perp} = -\nabla _Z ^{\perp}H=0.
\end{eqnarray}
On the other hand, the equation (\ref{e64}) gives
\begin{eqnarray} \label{e67}
{\overline R }(X, Z) X = -{1\over 4} \bigg \{ (c_1+ c_2) Z+(c_1-c_2) FZ \bigg \}.
\end{eqnarray}
Taking the normal component of (\ref{e67}), we get
\begin{eqnarray} \label{e68}
({\overline R }(X, Z) X)^{\perp} = -{1\over 4} (c_1- c_2)NZ ,
\end{eqnarray}
which contradicts (\ref{e66}).
\end{proof}

\noindent
We have immediately from Theorem 7.2. that:
\begin{cor} There exists no totally geodesic proper hemi-slant submanifold of a l.p.R. manifold
$\bar{M } =M_1(c_1)\times M_2(c_2)$ with $c_1\neq c_2$.
\end{cor}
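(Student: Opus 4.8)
The plan is to obtain this corollary as an immediate specialization of Theorem 7.2, observing that the totally geodesic hypothesis is stronger than the pair of hypotheses (totally umbilical, parallel mean curvature) required there. First I would note that if $M$ is totally geodesic then $h \equiv 0$ by definition. Comparing with the totally umbilical condition (2.7), namely $h(U,V) = g(U,V)H$, this forces $H = 0$; in particular $M$ is totally umbilical (the condition holds trivially with $H=0$). Moreover the vanishing mean curvature vector field is parallel in the normal bundle, since $\nabla^\perp_U H = \nabla^\perp_U 0 = 0$ for every $U \in TM$.

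With these two observations in hand, $M$ satisfies all the hypotheses of Theorem 7.2 for the product $\bar M = M_1(c_1)\times M_2(c_2)$ with $c_1 \neq c_2$. I would then simply invoke that theorem, which asserts that such a submanifold cannot be proper. This is precisely the content of the corollary: no proper hemi-slant submanifold of this type can be totally geodesic.

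For a reader who prefers a self-contained derivation, I would rerun the argument of Theorem 7.2 in the degenerate case $h \equiv 0$. Fixing unit vectors $X \in \mathcal{D}^\perp$ and $Z \in \mathcal{D}^\theta$ (both available because $M$ is proper, so $p\neq0$ and $\theta \neq 0,\tfrac{\pi}{2}$), the Codazzi equation (2.9) together with the formula (2.10) for the covariant derivative of $h$ and the identity $h\equiv 0$ yields $(\bar R(X,Z)X)^\perp = 0$. On the other hand the product curvature formula gives, as in (7.7)-(7.8), $(\bar R(X,Z)X)^\perp = -\tfrac14(c_1-c_2)NZ$. Equating these forces $(c_1-c_2)NZ = 0$, while the slant relation (3.6) gives $\lvert NZ\rvert^2 = \sin^2\!\theta\,\lvert Z\rvert^2 \neq 0$; together with $c_1 \neq c_2$ this is a contradiction.

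I do not anticipate any genuine obstacle here, since the substance is entirely contained in Theorem 7.2. The only point needing routine verification is the reduction at the start: that the totally geodesic condition $h\equiv 0$ simultaneously delivers both the totally umbilical condition and the parallelism of the mean curvature vector, which are immediate from $h=0$ and $H=0$.
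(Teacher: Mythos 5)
Your proposal is correct and follows exactly the route the paper takes: the paper states the corollary is immediate from Theorem 7.2, and your reduction (totally geodesic $\Rightarrow h\equiv 0 \Rightarrow H=0$, which is trivially parallel, so the hypotheses of Theorem 7.2 hold) is precisely the implicit justification. Your optional self-contained rerun of the Codazzi-equation argument with $h\equiv 0$ is a faithful specialization of the paper's proof of Theorem 7.2, so nothing differs in substance.
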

\section{Ricci curvature of  hemi-slant submanifolds}
In this section, we obtain a basic inequality involving Ricci
curvature and the squared mean curvature of a hemi-slant submanifold
of a l.p.R. manifold $\bar{M}=M_1(c_1)\times M_2(c_2)$. 
We first represent the following fundamental facts about this topic.\\

Let $\bar{M}$ be a $n$-dimensional Riemannian manifold equipped with a Riemannian metric $g$ and
$\{e_{1},...,e_{n}\}$ be an orthonormal basis for $T_{p}\bar{M},$ $p\in \bar{M}.$ Then the \emph{Ricci tensor} $\overline S$
is defined by
\begin{eqnarray} \label{e68}
\overline S(U,V)=\displaystyle\sum^{n}_{i=1}\overline R(e_{i},U,V,e_{i}),
\end{eqnarray}
where $U,V\in T_{p}\bar{M}.$ For a fixed $i\in\{1,...,n\}$, the \emph{Ricci curvature} of $e_{i}$, denoted by $\overline Ric(e_{i}),$ is given by
\begin{eqnarray} \label{e69}
\overline Ric(e_{i})=\displaystyle\sum^{n}_{i\neq j}\overline K_{ij},
\end{eqnarray}
where $\overline K_{ij}=g(\overline R(e_{i},e_{j})e_{j},e_{i})$ is the \emph{sectional curvature} of the plane spanned by the plane spanned by $e_{i}$ and $e_{j}$ at $p\in \bar{M}.$
Let $\Pi_{k}$ be a $k$-plane of $T_{p}\bar{M}$ and $\{e_{1},...,e_{k}\}$ any orthonormal basis of $\Pi_{k}$.
For a fixed $i\in\{1,...,k\}$, the $k$-\emph{Ricci curvature} \cite{Chen} of $\Pi_{k}$ at $e_{i}$, denoted by $\overline Ric_{\Pi_{k}}(e_{i}),$ is defined  by

\begin{eqnarray} \label{e70}
\overline Ric_{\Pi_{k}}(e_{i})=\displaystyle\sum^{k}_{i\neq j}\overline K_{ij}.
\end{eqnarray}
It is easy to see that $\overline Ric_{({T_{p}\!\bar{M}})}(e_{i})=\overline Ric(e_{i})$ for $1\leq i\leq n,$ since $\Pi_{n}=T_{p}\bar{M}.$\\

We now recall that the following basic inequality [10, Theorem 3.1] involving Ricci curvature and the squared mean curvature of a submanifold
of a Riemannian manifold.
\begin{theorem} ([10, Theorem 3.1]) Let $M$ be an $m$-dimensional  submanifold
of a Riemannian manifold $\bar{M}$. Then, for any unit vector $X\in T_{p}M$, we have
\begin{eqnarray} \label{e71}
Ric(X)\leq\frac{1}{4}m^{2}\|H\|^{2}+\overline Ric_{({T_{p}\!M})}(X)
\end{eqnarray}
where $Ric(X)$ is the Ricci curvature of $X$.
\end{theorem}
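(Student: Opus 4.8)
The plan is to derive the inequality from the Gauss equation together with a pointwise algebraic estimate on the second fundamental form, which is the standard route for Chen-type inequalities. Fix $p\in M$ and a unit vector $X\in T_pM$. I would complete $X$ to an orthonormal basis $\{e_1=X,e_2,\dots,e_m\}$ of $T_pM$ and choose an orthonormal basis $\{e_{m+1},\dots,e_n\}$ of the normal space $T_p^{\perp}M$, writing $h^r_{ij}=g(h(e_i,e_j),e_r)$ for the components of the second fundamental form in each normal direction $e_r$.

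First I would invoke the Gauss equation to express the sectional curvature $K_{1j}$ of the plane spanned by $e_1,e_j$ inside $M$ as
$$K_{1j}=\overline{K}_{1j}+\sum_{r=m+1}^{n}\left(h^r_{11}h^r_{jj}-(h^r_{1j})^2\right),$$
and then sum over $j=2,\dots,m$. The left-hand side becomes $Ric(X)$, the Ricci curvature of $X$ in $M$, while the ambient terms sum to $\overline{Ric}_{T_pM}(X)$ by the definition (\ref{e70}) applied to $\Pi_m=T_pM$. This yields the exact identity
$$Ric(X)=\overline{Ric}_{T_pM}(X)+\sum_{r=m+1}^{n}\sum_{j=2}^{m}\left(h^r_{11}h^r_{jj}-(h^r_{1j})^2\right),$$
so the whole problem reduces to bounding the extrinsic double sum by $\tfrac14 m^2\|H\|^2$.

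The heart of the argument is this pointwise estimate. Setting $t_r=\sum_{i=1}^{m}h^r_{ii}$, so that $m^2\|H\|^2=\sum_r t_r^2$, I would for each fixed normal index $r$ first discard the nonnegative term $\sum_{j=2}^{m}(h^r_{1j})^2$ and then complete the square in the remaining expression $h^r_{11}(t_r-h^r_{11})$:
$$h^r_{11}(t_r-h^r_{11})=\frac14 t_r^2-\left(\frac12 t_r-h^r_{11}\right)^2\leq\frac14 t_r^2.$$
Summing over $r$ bounds the extrinsic sum by $\tfrac14\sum_r t_r^2=\tfrac14 m^2\|H\|^2$, and substituting back into the Gauss identity gives exactly (\ref{e71}).

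The only delicate point is the bookkeeping: one must fix the sign convention in the Gauss equation so that the term $-(h^r_{1j})^2$ enters with the correct sign (so that dropping it genuinely weakens the bound rather than strengthening it), and verify that the ambient sectional-curvature sum matches the definition of $\overline{Ric}_{T_pM}(X)$ in (\ref{e70}). No ambient curvature hypothesis is used, so the inequality holds for an arbitrary Riemannian manifold $\bar{M}$. Equality occurs precisely when $h^r_{1j}=0$ for all $j\geq 2$ and $h^r_{11}=\tfrac12 t_r$ for every $r$, but this characterization is not needed for the stated inequality.
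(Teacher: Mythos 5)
Your proof is correct. Note that the paper itself gives no proof of this statement --- it is quoted as [10, Theorem 3.1] (Hong--Tripathi) --- so there is no internal argument to compare against; what you have written is precisely the standard proof found in that reference: the Gauss equation with the convention (2.8) gives $K_{1j}=\overline K_{1j}+\sum_{r}\bigl(h^r_{11}h^r_{jj}-(h^r_{1j})^2\bigr)$, summing over $j$ identifies the ambient term with $\overline{Ric}_{T_pM}(X)$ as defined in (\ref{e70}), and since $mH=\sum_i h(e_i,e_i)$ one has $m^2\|H\|^2=\sum_r t_r^2$, so discarding $-\sum_j (h^r_{1j})^2\le 0$ and completing the square $h^r_{11}(t_r-h^r_{11})=\tfrac14 t_r^2-\bigl(\tfrac12 t_r-h^r_{11}\bigr)^2\le\tfrac14 t_r^2$ yields (\ref{e71}). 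Your stated equality condition is also the correct one, though, as you say, it is not needed here.
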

Of course, the equality case of (\ref{e71}) was also discussed in \cite{Ho}, but we will not deal with the equality case in this paper.\\

Now, we are ready to state main result of this section.
\begin{theorem} Let $M$ be an $m$-dimensional  hemi-slant submanifold of a l.p.R. manifold $\bar{M}=M_1(c_1)\times M_2(c_2).$
 Then, for unit vector $V\in T_{p}M$, we have
\begin{eqnarray} \label{e72}
4Ric(V)\leq m^{2}\|H\|^{2}\!+(c_{1}\!+c_{2})\bigg\{\!(m-1)\!+\!\displaystyle\sum^{m}_{i=2}\!g(Te_{i},e_{i})g(TV,V)\\
-\|TV\|^{2}\!\!+\!g(TV,V)\!\bigg\}\!+\!(c_{1}\!-\!c_{2}\!)\bigg\{\!\!\displaystyle\sum^{m}_{i=2}\!g(Te_{i},e_{i})\!+\!\!(m\!-\!1\!)g(TV,V)\!\bigg\}\nonumber
\end{eqnarray}
where $\{V,e_{2},...,e_{m}\}$ is an orthonormal basis for $T_{p}M.$
\end{theorem}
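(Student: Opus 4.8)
The plan is to obtain the inequality by specializing the general bound of Theorem~8.1 to the ambient space $\bar{M}=M_1(c_1)\times M_2(c_2)$, whose curvature tensor is explicitly known from (\ref{e64}). Applying Theorem~8.1 to the unit vector $V\in T_pM$ gives at once
\begin{equation}\nonumber
Ric(V)\leq\tfrac{1}{4}m^{2}\|H\|^{2}+\overline{Ric}_{(T_pM)}(V),
\end{equation}
so that everything reduces to evaluating the ambient $m$-Ricci curvature $\overline{Ric}_{(T_pM)}(V)=\sum_{i=2}^{m}\overline{K}_{1i}$, where I set $e_{1}=V$ and $\overline{K}_{1i}=g(\overline{R}(V,e_{i})e_{i},V)$, and then multiplying through by $4$.

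First I would insert $\bar U=V$, $\bar V=\bar W=e_{i}$ into the curvature formula (\ref{e64}) and pair the result against $V$. Orthonormality of $\{V,e_{2},\dots,e_{m}\}$ collapses the metric factors $g(e_{i},e_{i})$ and $g(V,e_{i})$ to $1$ and $0$. For the surviving factors of the form $g(F\,\cdot,\cdot)$ I would use the decomposition $FU=TU+NU$ of (\ref{e6}): since $V,e_{i}\in T_pM$ and $NU\in T^{\perp}M$, each such factor equals its tangential analogue, e.g.\ $g(Fe_{i},e_{i})=g(Te_{i},e_{i})$, $g(FV,V)=g(TV,V)$ and $g(FV,e_{i})=g(TV,e_{i})$. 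I would also record that $T$ is self-adjoint, a consequence of (\ref{e00}) and (\ref{e0}), so that $g(Te_{i},V)=g(TV,e_{i})$. After these substitutions each $\overline{K}_{1i}$ becomes a combination of $c_{1}+c_{2}$ and $c_{1}-c_{2}$ whose coefficients are built from $g(Te_{i},e_{i})$, $g(TV,V)$ and $g(TV,e_{i})^{2}$.

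The last step is to sum over $i=2,\dots,m$ and regroup. The $(c_{1}-c_{2})$-part is immediate and produces $\sum_{i=2}^{m}g(Te_{i},e_{i})+(m-1)g(TV,V)$. In the $(c_{1}+c_{2})$-part the only delicate point is the quadratic term: expanding $TV$ in the orthonormal basis yields the Parseval-type identity $\sum_{i=2}^{m}g(TV,e_{i})^{2}=\|TV\|^{2}-g(TV,V)^{2}$, which rewrites $\sum_i g(TV,e_i)^2$ in terms of $\|TV\|^{2}$ and $g(TV,V)$ and thereby generates the $-\|TV\|^{2}$ and the remaining terms of the $(c_{1}+c_{2})$-bracket. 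Combining the two regrouped sums gives precisely the bracketed expression in the statement, and the prefactor $4$ comes from clearing the $\tfrac{1}{4}$ in Theorem~8.1.

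I expect no serious obstacle: the argument is a bounded computation rather than a conceptual one. The places that demand care are the systematic replacement of $F$ by its tangential part $T$ on tangent inner products, the use of the self-adjointness of $T$, and the bookkeeping that keeps the two curvature groups $(c_{1}\pm c_{2})$ separate while the Parseval identity is applied to the cross terms $g(TV,e_i)^2$. Notably, no hemi-slant-specific feature enters beyond the splitting $FU=TU+NU$, so the same computation would in fact deliver the inequality for an arbitrary submanifold of $M_1(c_1)\times M_2(c_2)$.
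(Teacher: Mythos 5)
Your proposal is correct and takes exactly the paper's route: apply the Hong--Tripathi bound (\ref{e71}) of Theorem~8.1 and evaluate the ambient $m$-Ricci curvature (\ref{e70}) by inserting the curvature tensor (\ref{e64}) and replacing $g(F\cdot,\cdot)$ by $g(T\cdot,\cdot)$ on tangent arguments via (\ref{e6}).

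One point deserves explicit mention: carried out exactly, your Parseval step $\sum_{i=2}^{m}g(TV,e_{i})^{2}=\|TV\|^{2}-g(TV,V)^{2}$ produces the term $+\,g(TV,V)^{2}$ in the $(c_{1}+c_{2})$-bracket, whereas the stated inequality (\ref{e72}) (and the paper's intermediate identity (\ref{e73})) displays $+\,g(TV,V)$ without the square. Since the paper's proof is the very same computation, the unsquared term is evidently a typographical slip in the paper rather than a defect in your argument; but your write-up glosses over this by saying the identity ``generates the remaining terms of the bracket,'' so be aware that your (correct) derivation does not literally reproduce the printed right-hand side. The discrepancy is invisible in Corollaries~8.4 and~8.5, where the hypothesis forces $g(TV,V)=0$, so both versions collapse to the same bounds; your closing observation that nothing hemi-slant-specific is used is also accurate --- the hemi-slant structure only enters afterwards, when $\|TV\|^{2}$ and $TV$ are evaluated through Lemma~3.3 and (3.5).
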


\begin{proof} Let $M$ be an $m$-dimensional  hemi-slant submanifold of a l.p.R. manifold $\bar{M}=M_1(c_1)\times M_2(c_2).$ Then
for any unit vector $V\in T_{p}M$, using (\ref{e64}) and (\ref{e6})
from (\ref{e70}) we have
\begin{eqnarray} \label{e73}
\!\!\!\!\!\!\!\!4\overline Ric_{(\!T_{p}\!M\!)}\!(V)\!\!=\!\!(c_{1}\!+c_{2})\bigg\{\!(m-1)\!+\!\displaystyle\sum^{m}_{i=2}\!g(Te_{i},e_{i})g(TV,V)\\
-\|TV\|^{2}\!\!+\!g(TV,V)\!\bigg\}\!+\!(c_{1}\!-\!c_{2}\!)\bigg\{\!\!\displaystyle\sum^{m}_{i=2}\!g(Te_{i},e_{i})\!+\!\!(m\!-\!1\!)g(TV,V)\!\bigg\}\nonumber
\end{eqnarray}
Thus, using (\ref{e73}) in (\ref{e71}) we get (\ref{e72}).
\end{proof}
\begin{remark} In general, $g(F\overline{V},\overline{V})\neq0$ for any unit vector $\overline{V}\in T_{p}\bar{M}$
in a l.p.R. manifold $\bar{M}$, contrary to almost Hermitian
$(g(J\overline{V},\overline{V})=0)$ and almost contact
$((g(\varphi\overline{V},\overline{V})=0)$ manifolds. However, we
can establish that the almost product structure $F$ in a l.p.R.
manifold $\bar{M}$ such that $g(F\overline{V},\overline{V})=0,$ for
all  $\overline{V}\in T_{p}\bar{M}$. In fact, if $\bar{M}$ is an
even dimensional l.p.R. manifold with an orthonormal basis
$\{e_{1},...,e_{n},e_{n+1},...,e_{2n}\}$, then we can define $F$ by
\begin{equation}\nonumber
F(e_{j})=e_{n+j}, \quad F(e_{n+j})=e_{j},\quad j\in\{1,2,...,n\}.
\end{equation}
Hence, we observe easily that the almost product structure $F$
satisfies
\begin{eqnarray} \label{e74}
g(Fe_{j},e_{j})=0.
\end{eqnarray}
\end{remark}
For example, the almost product structure $F$ in example of section 3, satisfies the condition (\ref{e74}).
On the other hand, because of Lemma 3.3 and the equation (3.5), we have $TV=0$, if $V\in\mathcal{D}^{\perp}$
and $\|TV\|^{2}=\cos^{2}\!\theta$, if $V\in\mathcal{D}^{\theta}$ and $\|V\|=1$, respectively. Thus, by Theorem 8.2 we get the following two results.
\begin{cor} Let $M$ be an $m$-dimensional  anti-invariant submanifold of a l.p.R. manifold $\bar{M}=M_1(c_1)\times M_2(c_2).$
If the almost product structure $F$ of $\bar{M}$ satisfies the
condition (\ref{e74}), then we have
\begin{eqnarray} \nonumber
4Ric(V)\leq m^{2}\|H\|^{2}+(c_{1}+c_{2})(m-1),
\end{eqnarray}
where $V\in T_{p}M$ is any unit vector.
\end{cor}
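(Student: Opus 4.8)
The plan is to obtain this corollary directly from Theorem 8.2 by specializing to the anti-invariant setting, where the tangential operator $T$ of (\ref{e6}) degenerates completely. So the entire argument amounts to recording that $T\equiv 0$ on $TM$ and substituting this into the inequality (\ref{e72}).

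First I would note that for an anti-invariant submanifold the whole tangent bundle coincides with the anti-invariant distribution, $TM=\mathcal{D}^{\perp}$, so the defining inclusion $F\mathcal{D}^{\perp}\subseteq T^{\perp}M$ says that $FU$ is a purely normal vector for every $U\in TM$. Comparing this with the decomposition (\ref{e6}) forces the tangential component to vanish, i.e. $TU=0$ for all $U\in TM$ (this is the content of Lemma 3.3(a), whose proof of part (a) uses only anti-invariance). In particular $g(FV,V)=g(TV,V)=0$ for every tangent $V$, so the hypothesis (\ref{e74}) is automatically satisfied on $T_{p}M$ and is not actually needed for the reduction.

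With $T\equiv 0$ in hand, I would substitute $TV=0$ and $Te_{i}=0$ for the chosen orthonormal basis $\{V,e_{2},\dots,e_{m}\}$ into (\ref{e72}). Each term carrying a factor of $T$ then drops out: $g(Te_{i},e_{i})=0$, $g(TV,V)=0$ and $\|TV\|^{2}=0$. Hence the whole $(c_{1}-c_{2})$-block vanishes and the $(c_{1}+c_{2})$-block collapses to the single term $(m-1)$, so that (\ref{e72}) reads
\[
4\,Ric(V)\leq m^{2}\|H\|^{2}+(c_{1}+c_{2})(m-1),
\]
which is the asserted inequality.

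I do not expect any real obstacle here: once the vanishing of $T$ is observed, the result is a one-line substitution into Theorem 8.2. The only conceptual point worth flagging is that it is precisely the anti-invariance that annihilates the entire $(c_{1}-c_{2})$ contribution in (\ref{e72}); this is why, in contrast to the general hemi-slant or proper slant case, the difference $c_{1}-c_{2}$ does not enter the final estimate.
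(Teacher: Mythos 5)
Your proof is correct and follows exactly the paper's route: the paper likewise derives this corollary by noting (via Lemma 3.3) that $T\equiv 0$ on an anti-invariant submanifold and substituting into the inequality of Theorem 8.2, which kills every $T$-term including the whole $(c_{1}-c_{2})$ block. Your additional observation that the hypothesis (\ref{e74}) is therefore redundant for this particular corollary is a valid refinement, though the paper keeps it, apparently for uniformity with the slant case that follows.
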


\begin{cor} Let $M$ be an $m$-dimensional slant submanifold of a l.p.R. manifold $\bar{M}=M_1(c_1)\times M_2(c_2).$
If the almost product structure $F$ of $\bar{M}$ satisfies the
condition (\ref{e74}), then we have
\begin{eqnarray} \nonumber
4Ric(Z)\leq m^{2}\|H\|^{2}+(c_{1}+c_{2})\{(m-1)-\cos^{2}\!\theta\},
\end{eqnarray}
where $Z\in T_{p}M$ is any unit vector.
\end{cor}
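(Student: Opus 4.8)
The plan is to deduce this Corollary as a direct specialization of the inequality \eqref{e72} of Theorem 8.2 to the case in which the hemi-slant submanifold degenerates to a slant submanifold, i.e. $p=0$ and $TM=\mathcal{D}^{\theta}$. The whole argument amounts to evaluating, for the adapted orthonormal basis $\{Z,e_2,\ldots,e_m\}$ of $T_pM$, the three tangential quantities that enter \eqref{e72}: the norm $\|TZ\|^2$, the diagonal terms $g(TZ,Z)$ and $g(Te_i,e_i)$, and the cross sum $\sum_{i=2}^m g(Te_i,e_i)\,g(TZ,Z)$. Once these are pinned down, the conclusion should drop out by substitution.

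First I would record the slant norm. Since $M$ is slant, every tangent vector lies in $\mathcal{D}^{\theta}$, so \eqref{e151} applies with $W=Z$ to give $\|TZ\|^2=g(TZ,TZ)=\cos^2\!\theta$ for any unit $Z\in T_pM$; this is precisely what will supply the $-\cos^2\!\theta$ term in the stated bound. Next I would bring in the hypothesis \eqref{e74}. Writing $FU=TU+NU$ as in \eqref{e6} and using that $NU\in T^{\perp}M$ is orthogonal to the tangent vector $U$, I obtain $g(TU,U)=g(FU,U)$ for every $U\in T_pM$. Choosing the orthonormal frame $\{Z,e_2,\ldots,e_m\}$ of $T_pM$ compatibly with an ambient orthonormal basis of the form constructed in the preceding Remark, condition \eqref{e74} then forces $g(Fe_i,e_i)=0$, whence $g(TZ,Z)=0$ and $g(Te_i,e_i)=0$ for all $i$; in particular the cross sum $\sum_{i=2}^m g(Te_i,e_i)\,g(TZ,Z)$ vanishes as well.

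Substituting these evaluations into \eqref{e72} is then purely mechanical: the entire $(c_1-c_2)$-bracket collapses, since $\sum_{i=2}^m g(Te_i,e_i)+(m-1)g(TZ,Z)=0$, while inside the $(c_1+c_2)$-bracket the cross term and the isolated $g(TZ,Z)$ both drop out and $\|TZ\|^2$ is replaced by $\cos^2\!\theta$, leaving exactly $(m-1)-\cos^2\!\theta$. This produces $4Ric(Z)\le m^2\|H\|^2+(c_1+c_2)\{(m-1)-\cos^2\!\theta\}$, as required. The one genuinely delicate step, where I expect the actual care to be needed, is the vanishing of the diagonal terms $g(TZ,Z)$ and $g(Te_i,e_i)$: because the tangential endomorphism $T$ (like $F$) is self-adjoint, the quadratic form $U\mapsto g(TU,U)$ need not vanish on an \emph{arbitrary} orthonormal basis even when \eqref{e74} holds for the distinguished ambient frame. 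Thus I must argue that the orthonormal basis $\{Z,e_2,\ldots,e_m\}$ of $T_pM$ may legitimately be taken compatible with an ambient basis of the type exhibited in the Remark, so that \eqref{e74} transfers to the submanifold frame; everything else in the proof is routine substitution.
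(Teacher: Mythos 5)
Your proposal is correct and takes essentially the same route as the paper: the paper's proof of this corollary is precisely the substitution you describe, specializing Theorem 8.2 to $TM=\mathcal{D}^{\theta}$, using $\|TZ\|^{2}=\cos^{2}\!\theta$ from (\ref{e151}) via Lemma 3.3, and letting condition (\ref{e74}) annihilate the diagonal terms $g(TZ,Z)$, $g(Te_{i},e_{i})$ and hence the cross sum, so that only $(m-1)-\cos^{2}\!\theta$ survives in the $(c_{1}+c_{2})$-bracket. The one ``delicate step'' you flag is not carried out in the paper either: its preceding Remark simply asserts the stronger pointwise reading $g(F\bar{V},\bar{V})=0$ for \emph{all} $\bar{V}\in T_{p}\bar{M}$, under which the diagonal terms vanish for every unit vector and every tangent frame with no basis-compatibility argument needed — so your caveat identifies a looseness in the paper itself rather than a defect in your argument.
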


\bibliographystyle{amsplain}

\end{document}